\titleformat{\section}[block]
 {\bfseries}
 {\thesection.}
 {\fontdimen2\font}
 {}
\setlist{noitemsep}
\newtheorem{theorem}{Theorem}[section]
\newtheorem{corollary}[theorem]{Corollary}
\newtheorem{proposition}[theorem]{Proposition}
\newtheorem{lemma}[theorem]{Lemma}
\theoremstyle{definition}
\newtheorem{remark}[theorem]{Remark}
\newtheorem{example}[theorem]{Example}
\DeclareMathOperator{\uhr}{\upharpoonright} 
\DeclareMathOperator{\st}{st} 
\DeclareMathOperator\sto{\leadsto}
\DeclareMathOperator\card{card}
\newcommand{\embed}[1]{\stackrel{#1}{\hookrightarrow}}
\newcommand{\skobi}[1]{[#1]}
\renewcommand{\emptyset}{\varnothing}
\numberwithin{equation}{section}
\begin{document}

\author{Valentin Gutev}

\address{Department of Mathematics, Faculty of Science, University of
     Malta, Msida MSD 2080, Malta}

\email{valentin.gutev@um.edu.mt}

\subjclass[2010]{54C60, 54C65, 54D20, 54F45, 55M10, 55U10}

\keywords{Lower locally constant mapping, continuous selection,
  connectedness in finite dimension, $C$-space, finite $C$-space,
  finite-dimensional space, simplicial complex, nerve.}

\title[Constructing Selections Stepwise Over Cones]{Constructing
  Selections Stepwise Over Cones of Simplicial Complexes}

\begin{abstract}
  It is obtained a natural generalisation of Uspenskij's selection
  characterisation of paracompact $C$-spaces. The method developed to
  achieve this result is also applied to give a simplified proof of a
  similar characterisation of paracompact finite $C$-space obtained
  previously by Valov. Another application is a characterisation of
  finite-dimensional paracompact spaces which generalises both a
  remark done by Michael and a result obtained by the author.
\end{abstract}

\date{\today}
\maketitle

\section{Introduction}

All spaces in this paper are Hausdorff topological spaces. A space $X$
has property $C$, or is a \emph{$C$-space}, \label{page-c-space}
if for any sequence $\{\mathscr{U} _n:n<\omega\}$ of open covers of
$X$ there exists a sequence $\{\mathscr{V} _n:n<\omega\}$ of open
pairwise-disjoint families in $X$ such that each $\mathscr{V} _n$
refines $\mathscr{U} _n$ and $\bigcup_{n<\omega}\mathscr{V} _n$ is a
cover of $X$.  The $C$-space property was originally defined by W.\
Haver \cite{haver:1974} for compact metric spaces, subsequently Addis
and Gresham \cite{addis-gresham:78} reformulated Haver's definition
for arbitrary spaces.  It should be remarked that a $C$-space $X$ is
paracompact if and only if it is countably paracompact and normal, see
e.g.\ \cite[Proposition 1.3]{MR2352366}. Every finite-dimensional
paracompact space, as well as every countable-dimensional metrizable
space, is a $C$-space \cite{addis-gresham:78}, but there exists a
compact metric $C$-space which is not countable-dimensional
\cite{pol:81}.\medskip

In what follows, we will use $\Phi:X\sto Y$ to designate that $\Phi$
is a map from $X$ to the nonempty subsets of $Y$, i.e.\ a
\emph{set-valued mapping}. A map $f:X\to Y$ is a \emph{selection} for
$\Phi:X\sto Y$ if $f(x)\in \Phi(x)$, for all $x\in X$. A mapping
$\Phi:X\sto Y$ is \emph{lower locally constant}, see \cite{gutev:05},
if the set $\{x\in X:K\subset \Phi(x)\}$ is open in $X$, for every
compact subset $K\subset Y$.  This property appeared in a paper of
Uspenskij \cite{uspenskij:98}; later on, it was used by some authors
(see, for instance, \cite{chigogidze-valov:00a,valov:00}) under the
name ``strongly l.s.c.'', while in papers of other authors strongly
l.s.c.\ was already used for a different property of set-valued
mappings (see, for instance, \cite{gutev:95e}). Regarding our
terminology, let us remark that a singleton-valued mapping (i.e.\ a
usual map) is lower locally constant precisely when it is locally
constant. \medskip

Finally, let us recall that a space $S$ is \emph{aspherical} if every
continuous map of the $k$-sphere ($k\geq 0$) in $S$ can be extended to
a continuous map of the $(k+1)$-ball in $S$.  The following theorem
was obtained by Uspenskij \cite[Theorem 1.3]{uspenskij:98}.

\begin{theorem}[\cite{uspenskij:98}]
  \label{theorem-Select-Ext-v6:1}
  A paracompact space $X$ is a $C$-space if and only if for every
  topological space $Y$, each lower locally constant aspherical-valued
  mapping $\Phi : X\sto Y$ has a continuous selection.
\end{theorem}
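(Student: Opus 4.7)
The plan is to prove both directions, with the implication ``$X$ is a $C$-space $\Rightarrow$ selections exist'' being the substantive one. Suppose $X$ is a paracompact $C$-space and $\Phi : X \sto Y$ is lower locally constant with aspherical values. I would first apply the singleton instance of lower local constancy: for every $x_0 \in X$ and every $y \in \Phi(x_0)$ there is an open neighbourhood $U \ni x_0$ with $y \in \Phi(x)$ for all $x \in U$. This produces an open cover $\mathscr{U}$ of $X$ together with a constant partial selection $U \mapsto y_U$ satisfying $y_U \in \Phi(x)$ throughout $U$. Iterating this construction with paracompactness yields a sequence $\{\mathscr{U}_n : n < \omega\}$ of open covers carrying compatible partial selections and ever-shrinking stars; the $C$-space property then furnishes pairwise disjoint open families $\{\mathscr{V}_n : n < \omega\}$, with $\mathscr{V}_n$ refining $\mathscr{U}_n$, whose union covers $X$.

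Setting $W_n = \bigcup_{i \leq n} \bigcup \mathscr{V}_i$, I would build continuous selections $f_n : W_n \to Y$ for $\Phi\uhr W_n$ inductively, each extending its predecessor; then $f = \bigcup_n f_n$ is a selection on all of $X$. Since the members of $\mathscr{V}_{n+1}$ are pairwise disjoint, the extension step may be performed separately on each $V \in \mathscr{V}_{n+1}$: on $V \cap W_n$ the map $f_n$ is already prescribed, and on the rest of $V$ it must be extended so as to take the constant value $y_V$ on a closed subset of $V$ disjoint from $W_n$. Topologically this realises $V$ as a cone, with ``base'' $V \cap W_n$ and apex the region where the extension equals $y_V$. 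By paracompactness on $V \cap W_n$ one picks a locally finite partition of unity subordinated to a refinement along which $f_n$ is suitably controlled, producing a map from $V$ into the cone over the nerve of this refinement; asphericity of the $\Phi$-values then allows the extension across the simplices of that cone dimension by dimension, keeping the image of each simplex inside a common $\Phi(x)$. The main technical obstacle lies here: one has to guarantee throughout the stepwise construction that the image remains inside $\Phi$ at every point, which is arranged by repeatedly invoking lower local constancy to shrink the covers so that the star of every vertex of the nerve is contained in a single $\Phi$-value, so that asphericity can then kill the extension obstruction cell by cell.

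For the converse, assume every lower locally constant aspherical-valued mapping admits a selection, and let $\{\mathscr{U}_n : n < \omega\}$ be a sequence of open covers of $X$. The strategy is to encode the search for disjoint refinements as a selection problem: construct a space $Y$ and a mapping $\Phi : X \sto Y$ such that any continuous selection decodes into the desired families $\{\mathscr{V}_n\}$. A natural candidate is to take $Y$ to be the wedge, at a common base point, of the cones over the nerves $K_n$ of $\mathscr{U}_n$, and $\Phi(x)$ to be the subwedge formed by the cones over the subcomplexes spanned by those vertices $U \in \mathscr{U}_n$ with $x \in U$. Each $\Phi(x)$ is a wedge of contractible spaces and hence aspherical; lower local constancy holds because any compact subset of $\Phi(x_0)$ involves only finitely many cover members containing $x_0$, a condition preserved on a small neighbourhood of $x_0$. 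A continuous selection $f$ then assigns to each $x$ a distinguished cone $CK_{n(x)}$ and a cell of it spanned by vertices of $\mathscr{U}_{n(x)}$ meeting $x$; taking preimages of open cells yields pairwise disjoint families $\mathscr{V}_n$ refining $\mathscr{U}_n$, with a small additional argument needed to handle selections landing on the wedge apex.
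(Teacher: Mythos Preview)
First, note that the paper does not prove Theorem~\ref{theorem-Select-Ext-v6:1} itself; it is quoted from \cite{uspenskij:98}, and the paper's contribution is the generalisation Theorem~\ref{theorem-Select-Ext-v6:2}. So any comparison is with the machinery of Sections~\ref{sec:extensions-maps-over}--\ref{sec:canon-maps-sequ} specialised to a constant sequence $\varphi_n=\Phi$. On the forward direction your outline is in the spirit of Uspenskij's original argument rather than the paper's, but the order of operations is off: you fix all the $\mathscr{U}_n$ first, apply the $C$-property to get the $\mathscr{V}_n$, and only then build the $f_n$; yet your extension of $f_n$ over $V\in\mathscr{V}_{n+1}$ requires a refinement along which $f_n$ is ``suitably controlled'', and that refinement depends on $f_n$, which did not exist when the $\mathscr{U}_n$ were fixed. (You also propose to extend from the \emph{open} set $V\cap W_n$, which is not how extension theorems work.) The paper's route avoids this circularity by building the covers $\mathscr{F}_n$ \emph{together with} a map $f:|\Delta(\mathscr{F}_{<\omega})|\to Y$ on the abstract complex (Theorem~\ref{theorem-Select-Ext-v14:1}, via Proposition~\ref{proposition-Select-Ext-v3:1} and Lemma~\ref{lemma-Select-Ext-v4:1}), using only paracompactness and lower local constancy; the $C$-property is applied afterwards to produce a canonical map $X\to|\Delta(\mathscr{F}_{<\omega})|$, and the selection is the composite.

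Your converse has a fatal gap. With $Y$ the wedge of cones $CK_n$ at a common apex $v$ and $\Phi(x)=\bigvee_n CL_n(x)$, the point $v$ lies in every $\Phi(x)$, so the constant map $f\equiv v$ is already a continuous selection and encodes nothing about the covers; this is not a boundary case to be patched but a structural defect. The paper's construction for the converse of Theorem~\ref{theorem-Select-Ext-v6:2} uses instead the values $|\Delta_{[\mathscr{U}_{\leq n}]}|(x)$, which share no universal point. If you want a single aspherical-valued $\Phi$ as in Theorem~\ref{theorem-Select-Ext-v6:1}, take $\Phi(x)=\left|\Delta_{[\mathscr{U}_{<\omega}]}\right|(x)$: this is the infinite join of the nonempty discrete sets $\{U\in\mathscr{U}_n:x\in U\}$, hence weakly contractible and therefore aspherical; it is lower locally constant because any compact subset meets only finitely many cells; and a selection for it is precisely a canonical map into $|\Delta(\mathscr{U}_{<\omega})|$, after which Theorem~\ref{theorem-Select-Ext-v18:2} delivers the $C$-refinement.
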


For $k\ge 0$ and subsets $S, B\subset Y$, we will write that
$S\embed{k} B$ if every continuous map of the $k$-sphere in $S$ can be
extended to a continuous map of the $(k+1)$-ball in $B$.  Similarly,
for mappings $\varphi, \psi :X\sto Y$, we will write
$\varphi \embed{k}\psi$ to express that $\varphi(x)\embed{k}\psi(x)$,
for every $x\in X$.  In these terms, we shall say that a sequence of
mappings $\varphi_n: X\sto Y$, $n<\omega$, is \emph{aspherical} if
$\varphi_n\embed{n}\varphi_{n+1}$, for every $n<\omega$. Also, to each
sequence of mappings $\varphi_n:X\sto Y$, $n<\omega$, we will
associate its union $\bigcup_{n<\omega}\varphi_n:X\sto Y$, defined
pointwise by
$\left[\bigcup_{n<\omega}\varphi_n\right]
(x)=\bigcup_{n<\omega}\varphi_n(x)$, $x\in X$.\medskip

In the present paper, we will show that the mapping $\Phi:X\sto Y$ in
Theorem~\ref{theorem-Select-Ext-v6:1} can be replaced by an aspherical
sequence of lower locally constant mappings $\varphi_n:X\sto Y$,
$n<\omega$. Namely, the following theorem will be proved.

\begin{theorem}
  \label{theorem-Select-Ext-v6:2}
  A paracompact space $X$ is a $C$-space if and only if for every
  topological space $Y$, each aspherical sequence $\varphi_n:X\sto Y$,
  $n<\omega$, of lower locally constant mappings admits a continuous
  selection for its union $\bigcup_{n<\omega}\varphi_n$.
\end{theorem}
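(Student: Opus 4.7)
This is immediate from Theorem~\ref{theorem-Select-Ext-v6:1}: if $\Phi:X\sto Y$ is lower locally constant with aspherical values, then the constant sequence $\varphi_n=\Phi$ satisfies $\Phi(x)\embed{n}\Phi(x)$ for every $n$ (by asphericity of $\Phi(x)$) and has $\bigcup_{n<\omega}\varphi_n=\Phi$; the hypothesised selection for the union is therefore a selection for $\Phi$, which makes $X$ a $C$-space by Theorem~\ref{theorem-Select-Ext-v6:1}.

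\textbf{Setup for the converse.} For the nontrivial direction, assume $X$ is a paracompact $C$-space and $(\varphi_n)$ is an aspherical sequence of lower locally constant mappings into $Y$. Lower local constancy gives, for each $x\in X$ and $n<\omega$, a finite set $F^n_x\subset\varphi_n(x)$ and an open neighborhood $W^n_x\ni x$ with $F^n_x\subset\varphi_n(y)$ for every $y\in W^n_x$. Applying the $C$-space property to the covers $\mathscr{U}_n=\{W^n_x:x\in X\}$ yields pairwise-disjoint open families $\mathscr{V}_n$ refining $\mathscr{U}_n$ whose union covers $X$; to each $V\in\mathscr{V}_n$ we attach a persistent finite sample $F_V$ lying in $\varphi_n(y)$ for every $y\in V$. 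By paracompactness, a locally finite partition of unity subordinated to $\bigcup_{n<\omega}\mathscr{V}_n$ yields a canonical barycentric map $\kappa:X\to N$ into the nerve $N$ of $\bigcup_{n<\omega}\mathscr{V}_n$, whose vertices we grade by level.

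\textbf{Stepwise construction of a selection over cones.} Since each $\mathscr{V}_n$ is pairwise disjoint, the vertex set of every simplex of $N$ is linearly ordered by level. The plan is to build a continuous map $g:N\to Y$ by induction on the top level $m$ appearing in a subcomplex, with the invariant that every simplex corresponding to $V_{n_0}\in\mathscr{V}_{n_0},\dots,V_{n_k}\in\mathscr{V}_{n_k}$ with $n_0<\dots<n_k=m$ is mapped into $\varphi_m(y)$ for every $y\in V_{n_0}\cap\dots\cap V_{n_k}$. At level $0$, each vertex $V\in\mathscr{V}_0$ is sent to a chosen point of its sample $F_V$. To pass from $N_{<m}$ to $N_{\le m}$, each new vertex $V\in\mathscr{V}_m$ serves as the apex of cones over subcomplexes of $N_{<m}$ whose images already lie in the relevant $\varphi_{m-1}(y)$; the asphericity $\varphi_{m-1}\embed{m-1}\varphi_m$ then furnishes the required extension across each cone, with values in $\varphi_m(y)$. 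The sought continuous selection of $\bigcup_{n<\omega}\varphi_n$ is then $f=g\circ\kappa$.

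\textbf{The main obstacle.} The technical heart, precisely the ``over cones of simplicial complexes'' of the title, is the coherent execution of this cone extension. At stage $m$ I must (i)~organise the new simplices entering $N_{\le m}$ genuinely as cones over subcomplexes of $N_{<m}$ whose $g$-image is already confined to a single $\varphi_{m-1}(y)$, (ii)~apply asphericity so that the extension lands inside the \emph{same} $\varphi_m(y)$ throughout the cone rather than only pointwise, and (iii)~reconcile the individual cone extensions into one continuous map $g$ on $N_{\le m}$. It is the interplay between lower local constancy (which yields the persistent finite samples), pairwise disjointness (which yields the graded nerve), and dimension-matched asphericity (which supplies the sphere-to-ball extension at exactly the right level) that makes the induction close; this bookkeeping is where I expect the bulk of the proof to sit.
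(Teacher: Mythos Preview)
Your ``if'' direction via Theorem~\ref{theorem-Select-Ext-v6:1} is correct; the paper actually gives a self-contained argument (through Propositions~\ref{proposition-Select-Ext-v16:1}, \ref{proposition-Select-Ext-v16:2} and Theorem~\ref{theorem-Select-Ext-v18:2}) that does not invoke Uspenskij's theorem, but your shortcut is legitimate and is even noted right after the statement of Theorem~\ref{theorem-Select-Ext-v6:2}.

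The ``only if'' direction, however, has a structural gap that is not mere bookkeeping. Your inductive invariant says that a simplex with top level $m'$ maps into $\varphi_{m'}(y)$. At stage $m$ the link of a new apex $V\in\mathscr{V}_m$ consists of simplices whose top levels range over \emph{various} $m'<m$, so their images lie in various $\varphi_{m'}(y)$ --- and nothing in the hypotheses gives $\varphi_{m'}(y)\subset\varphi_{m-1}(y)$. Thus the premise ``whose images already lie in the relevant $\varphi_{m-1}(y)$'' is unjustified. Worse, the apex itself is sent (via your sample $F_V$) into $\varphi_m(y)$, so already for an edge $[V',V]$ with $V'\in\mathscr{V}_{m'}$ the boundary $0$-sphere $\{g(V'),g(V)\}$ sits in $\varphi_{m'}(y)\cup\varphi_m(y)$ and in no single $\varphi_k(y)$; none of the relations $\varphi_k\embed{k}\varphi_{k+1}$ applies. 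Switching to the skeletal invariant $g(|\sigma|)\subset\varphi_{\dim\sigma}(y)$ (which is what Proposition~\ref{proposition-Select-Ext-v3:1} actually propagates) does not rescue the scheme either: you would then need $g(V)\in\varphi_0(y)$ for every $y\in V$ and every vertex $V$, but your $\mathscr{V}_m$ was refined from neighbourhoods adapted only to $\varphi_m$; and even with vertices placed correctly, the path filling a $0$-sphere in $\varphi_0(y)$ is produced for one $y$ at a time, while you need a single path valid for all $y$ in the kernel --- lower local constancy guarantees this only on a neighbourhood determined \emph{after} the path is chosen.

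This is exactly why the paper reverses your order of operations. In Theorem~\ref{theorem-Select-Ext-v14:1} (via Lemma~\ref{lemma-Select-Ext-v4:1}) the covers $\mathscr{F}_n$ and the map $f:|\Delta(\mathscr{F}_{<\omega})|\to Y$ are built \emph{together}: at each step the cone extension of Proposition~\ref{proposition-Select-Ext-v3:1} is carried out pointwise, lower local constancy then yields neighbourhoods over which that particular extension persists, and only then is $\mathscr{F}_{n+1}$ chosen to refine those neighbourhoods. The $C$-space property is invoked afterwards and for a separate purpose --- to produce a canonical map $h:X\to|\Delta(\mathscr{F}_{<\omega})|$ (Theorem~\ref{theorem-Select-Ext-v18:2}); the selection is $f\circ h$. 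Fixing the covers before building $g$, as you do, forfeits the feedback loop that makes the induction close.
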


By taking $\varphi_n=\varphi_{n+1}$, $n<\omega$, the selection
property in Theorem \ref{theorem-Select-Ext-v6:2} immedia\-tely
implies that of Theorem \ref{theorem-Select-Ext-v6:1}. Implicitly, the
selection property in Theorem~\ref{theorem-Select-Ext-v6:1} also
implies that of Theorem \ref{theorem-Select-Ext-v6:2} because both
these properties are equivalent to $X$ being a $C$-space.  However,
the author is not aware of any explicit argument show\-ing this. In this
regard and in contrast to Theorem \ref{theorem-Select-Ext-v6:1}, the
proof of Theorem~\ref{theorem-Select-Ext-v6:2} is straightforward
in both directions. Here is briefly the idea behind this
proof.\medskip 

In the next section, we deal with a simple construction of continuous
extensions of maps over cones of simplicial complexes, see Proposition
\ref{proposition-Select-Ext-v3:1}. This construction is applied in
Section \ref{sec:skeletal-selections} to special simplicial complexes
which are defined in Section \ref{sec:finite-asph-sequ}. Namely, in
Section \ref{sec:finite-asph-sequ}, to each cover
$\bigcup_{n<\omega} \mathscr{U}_n$ of $X$ consisting of families
$\mathscr{U}_n$, $n<\omega$, of subsets of $X$, we associate a
subcomplex $\Delta(\mathscr{U}_{<\omega})$ of the nerve
$\mathscr{N}(\mathscr{U}_{<\omega})$, where $\mathscr{U}_{<\omega}$
stands for the disjoint union $\bigsqcup_{n<\omega}\mathscr{U}_n$, see
Example~\ref{example-Select-Ext-v12:1}. Intuitively,
$\Delta(\mathscr{U}_{<\omega})$ consists of those simplices
$\sigma\in \mathscr{N}(\mathscr{U}_{<\omega})$ which have at most one
vertex in each $\mathscr{U}_n$, $n<\omega$. One benefit of this
subcomplex is that
$\Delta(\mathscr{V}_{<\omega})= \mathscr{N}(\mathscr{V}_{<\omega})$,
whenever the families $\mathscr{V}_n$, $n<\omega$, are as in the
defining property of $C$-spaces, i.e.\ pairwise-disjoint, see
Proposition \ref{proposition-Select-Ext-v16:3}. Another benefit is
that each sequence of covers $\mathscr{U}_n$, $n<\omega$, of $X$
generates a natural aspherical sequence of mappings on $X$. This is
done by considering the sequence of subcomplexes
$\Delta(\mathscr{U}_{\leq n})$, $n<\omega$, where each
$\Delta(\mathscr{U}_{\leq n})$ is defined as above with respect to the
indexed cover $\mathscr{U}_{\leq n}=\bigsqcup_{k=0}^n \mathscr{U}_k$
of $X$. Then each $\Delta(\mathscr{U}_{\leq n})$, $n<\omega$,
generates a simplicial-valued mapping
$\Delta_{[\mathscr{U}_{\leq n}]}:X\sto \Delta(\mathscr{U}_{\leq n})$
which assigns to each $x\in X$ the simplicial complex
$\Delta_{[\mathscr{U}_{\leq n}]}(x)$ of those simplices
$\sigma\in \Delta(\mathscr{U}_{\leq n})$ for which
$x\in \bigcap\sigma$. Finally, we consider the geometric realisation
$\left|\Delta(\mathscr{U}_{\leq n})\right|$ of
$\Delta(\mathscr{U}_{\leq n})$ and the set-valued mapping
$\left|\Delta_{[\mathscr{U}_{\leq n}]}\right|:X\sto
\left|\Delta(\mathscr{U}_{\leq n})\right|\subset
\left|\Delta(\mathscr{U}_{<\omega})\right|$ corresponding to the
``geometric realisation'' of $\Delta_{[\mathscr{U }_{\leq n}]}$.
Thus, for open locally finite covers $\mathscr{U}_n$, ${n<\omega}$, of
$X$, the sequence
$\left|\Delta_{[\mathscr{U}_{\leq n}]}\right|:X\sto
\left|\Delta(\mathscr{U}_{<\omega})\right|$, $n<\omega$, is always
aspherical and consists of lower locally constant mappings,
Propositions \ref{proposition-Select-Ext-v16:1} and
\ref{proposition-Select-Ext-v16:2}. In Section
\ref{sec:skeletal-selections}, by restricting $X$ to be a paracompact
space, we show that each aspherical sequence $\varphi_n:X\sto Y$,
$n<\omega$, of lower locally constant mappings admits a sequence
$\mathscr{F}_n$, $n<\omega$, of closed locally finite interior covers
of $X$ and a continuous map
$f:\left|\Delta(\mathscr{F}_{<\omega})\right|\to Y$ such that each
composite mapping
${f\circ \left|\Delta_{[\mathscr{F}_{\leq n}]}\right|:X\sto Y}$ is a
set-valued selection for $\varphi_n$, $n<\omega$, see Theorem
\ref{theorem-Select-Ext-v14:1}. This is applied in
Section~\ref{sec:select-canon-maps} to show that the selection problem
in Theorem \ref{theorem-Select-Ext-v6:2} is now equivalent to that of
the mappings
$\left|\Delta_{[\mathscr{U}_{\leq n}]}\right|:X\sto
\left|\Delta(\mathscr{U}_{<\omega})\right|$, $n<\omega$, corresponding
to open locally finite covers $\mathscr{U}_n$, $n<\omega$, of $X$,
Theorem \ref{theorem-Select-Ext-vgg-rev:1}. In the same section, this
selection problem is further reduced to the existence of canonical
maps $f:X\to \left|\Delta(\mathscr{U}_{<\omega})\right|$, Corollary
\ref{corollary-Select-Ext-v22:1}.  Finally, in Section
\ref{sec:canon-maps-sequ}, it is shown that the existence of canonical
maps $f:X\to \left|\Delta(\mathscr{U}_{<\omega})\right|$ is equivalent
to $C$-like properties of $X$, Theorem
\ref{theorem-Select-Ext-v18:2}. Theorem \ref{theorem-Select-Ext-v6:2}
is then obtained as a special case of Theorem
\ref{theorem-Select-Ext-v18:2}, see Corollary
\ref{corollary-Select-Ext-v18:1}. Here, let us explicitly remark that
two other special cases of Theorem \ref{theorem-Select-Ext-v18:2} are
covering two other similar results --- Corollary
\ref{corollary-Select-Ext-v18:2} (a selection theorem of Valov
\cite[Theorem 1.1]{valov:00} about finite $C$-spaces) and Corollary
\ref{corollary-Select-Ext-v18:3} (generalising both a remark done by
Michael in \cite[Remark 2]{uspenskij:98} and a result obtained by the
author in \cite[Theorem~3.1]{gutev:2018a}).

\section{Extensions of maps over cones of simplicial complexes}
\label{sec:extensions-maps-over}

By a \emph{simplicial complex} we mean a collection $\Sigma$ of
nonempty finite subsets of a set $S$ such that $\tau\in \Sigma$,
whenever $\emptyset\neq \tau\subset \sigma\in \Sigma$. The set
$\bigcup\Sigma $ is the \emph{vertex set} of $\Sigma$, while each
element of $\Sigma$ is called a \emph{simplex}. The
\emph{$k$-skeleton} $\Sigma^k$ of $\Sigma$ ($k\geq 0$) is the
simplicial complex
$\Sigma^{k}=\{\sigma\in \Sigma:\card(\sigma)\leq k+1\}$, where
$\card(\sigma)$ is the cardinality of $\sigma$. In the sequel, for
simplicity, we will identify the vertex set of $\Sigma$ with its
$0$-skeleton $\Sigma^0$, and will say that $\Sigma$ is a
\emph{$k$-dimensional} if $\Sigma=\Sigma^k$.\medskip

The vertex set $\Sigma^0$ of each simplicial complex $\Sigma$ can be
embedded as a linearly independent subset of some linear normed
space. Then to any simplex $\sigma\in \Sigma$, we may associate the
corresponding \emph{geometric simplex} $|\sigma|$ which is the
convex hull of $\sigma$. Thus, $\card(\sigma)=k+1$ if and only if
$|\sigma|$ is a \emph{$k$-dimensional simplex}. Finally, we set
$|\Sigma|=\bigcup\{|\sigma|:\sigma\in \Sigma\}$ which is called the
\emph{geometric realisation} of $\Sigma$. As a topological space, we
will always consider $|\Sigma|$ endowed with the \emph{Whitehead
  topology} \cite{MR1576810,MR0030759}. This is the topology in which
a subset $U\subset |\Sigma|$ is open if and only if $U\cap |\sigma|$
is open in $|\sigma|$, for every $\sigma\in \Sigma$. \medskip

The \emph{cone} $Z*v$ over a space $Z$ with a vertex $v$ is the
quotient space of $Z\times[0,1]$ obtained by identifying all points of
$Z\times\{1\}$ into a single point $v$.  For a simplicial complex
$\Sigma$ and a point $v$ with $v\notin \Sigma^0$, the \emph{cone} on
$\Sigma$ with a vertex $v$ is the simplicial complex defined by
\[
  \Sigma*v=\Sigma\cup\left\{\sigma\cup \{v\}:
    \sigma\in\Sigma\right\}\cup \{\{v\}\}.
\]
Evidently, we have that $|\Sigma|*v=|\Sigma*v|$.

\begin{proposition}
  \label{proposition-Select-Ext-v3:1}
  Let ${S_0,\dots,S_{n+1}\subset Y}$ with
  ${\emptyset\neq S_0\embed{0} S_1\embed{1} \cdots \embed{n}
    S_{n+1}}$, and $g:|\Sigma|\to S_n$ be a continuous map from an
  $n$-dimensional simplicial complex $\Sigma$ such that
  $g(|\Sigma^k|)\subset S_k$, for every $k\leq n$. If
  $v\notin \Sigma^0$, then $g$ can be extended to a continuous map
  $h:|\Sigma*v|\to S_{n+1}$ such that $h(|(\Sigma*v)^k|)\subset S_k$,
  for every $k\leq n+1$.
\end{proposition}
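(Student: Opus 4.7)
The plan is to construct $h$ by induction on the skeleta of the cone $\Sigma*v$, using $g$ directly on the part of $|\Sigma*v|$ coming from $\Sigma$ and invoking the relations $S_k\embed{k}S_{k+1}$ to extend across the new simplices created by coning with $v$. For the base of the induction, I would set $h=g$ on $|\Sigma|$ and pick $h(v)$ to be any point of $S_0$, which is possible because $S_0\neq\emptyset$; together with the hypothesis $g(\Sigma^0)\subset S_0$, this places $h(|(\Sigma*v)^0|)\subset S_0$, as required.

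For the inductive step, assume that $h$ is already continuously defined on $|(\Sigma*v)^k|$ with $h(|(\Sigma*v)^j|)\subset S_j$ for every $j\leq k$. The $(k+1)$-simplices of $\Sigma*v$ not yet treated split into two types: (i)~$(k+1)$-simplices $\tau\in\Sigma$, on which $h=g$ is already defined and $g(|\tau|)\subset S_{k+1}$ by hypothesis; and (ii)~simplices of the form $\tau=\sigma\cup\{v\}$ where $\sigma$ is a $k$-simplex of $\Sigma$. For type (ii), every proper face of $\tau$ has at most $k+1$ vertices, so the boundary $\partial|\tau|$, a topological $k$-sphere, lies inside $|(\Sigma*v)^k|$, where $h$ is already continuous with image in $S_k$. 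The property $S_k\embed{k}S_{k+1}$ then supplies an extension of these boundary values to a continuous map $|\tau|\to S_{k+1}$. Running the induction up through $k+1=n+1$ exhausts $\Sigma*v$.

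To conclude, I would verify well-definedness and continuity. Any two distinct simplices of $\Sigma*v$ meet only in a common proper face, which has been treated at an earlier stage of the induction, so the simplex-by-simplex extensions automatically agree on overlaps. Continuity of the resulting $h$ then follows from the Whitehead topology: a map out of $|\Sigma*v|$ is continuous iff its restriction to every geometric simplex is continuous, which is true by construction.

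The argument is essentially a routine skeletal induction, so the main obstacle is bookkeeping — in particular, confirming at each stage that the boundary of a type-(ii) simplex really does lie inside the previously constructed skeleton (where $h$ is already known to take values in $S_k$), and that the type-(i) simplices automatically carry the correct image by the hypothesis on $g$. No deeper topology is needed beyond the $\embed{k}$ relations themselves and the criterion for continuity in the Whitehead topology.
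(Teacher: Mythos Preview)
Your proposal is correct and follows essentially the same skeletal-induction strategy as the paper's proof: fix $h=g$ on $|\Sigma|$, choose $h(v)\in S_0$, and then extend simplex-by-simplex over the new cone simplices $\sigma\cup\{v\}$ using the relations $S_k\embed{k}S_{k+1}$. Your explicit remarks on well-definedness along common faces and on continuity via the Whitehead topology are a useful addition; the only implicit point (also implicit in the paper) is that $S_0\neq\emptyset$ together with $S_k\embed{k}S_{k+1}$ forces $S_k\subset S_{k+1}$, which is what guarantees that the full $k$-skeleton image lands in $S_k$ at each stage.
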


\begin{proof}
  By a finite induction, extend each restriction
  $g_k=g\uhr\left|\Sigma^k\right|$ to a continuous map
  ${h_k:\left|(\Sigma*v)^k\right|\to S_k}$, $k\leq n$, such that
  $h_k\uhr \left|(\Sigma*v)^{k-1}\right|=h_{k-1}$, $k>0$. Briefly,
  define $h_0:(\Sigma*v)^0\to S_0$ by $h_0\uhr \Sigma^0= g_0$ and
  $h_0(v)\in S_0$. Whenever $u\in \Sigma^0$ is a vertex of $\Sigma$,
  the map $h_0:\{u,v\}\to S_0$ can be extended to a continuous map
  $h_{(1,u)}:|\{u,v\}|\to S_1$ because $S_0\embed{0} S_1$. Then the
  map $h_1:\left|(\Sigma*v)^1\right|\to S_1$ defined by
  $h_1\uhr (\Sigma*v)^0=h_0$, $h_1\uhr |\Sigma^1|=g_1$ and
  $h_1\uhr |\{u,v\}|=h_{(1,u)}$, $u\in \Sigma^0$, is a continuous
  extension of both $h_0$ and $g_1$. The construction can be carried
  on by induction to get a continuous extension
  $h_n:|(\Sigma*v)^n|\to S_n$ of $g=g_n$ with the required
  properties. Finally, if $\sigma\in \Sigma*v$ is an
  $(n+1)$-dimensional simplex, then $h_n$ is defined on the boundary
  $|\sigma|\cap |(\Sigma*v)^n|$ of $|\sigma|$ which is homeomorphic to
  the $n$-sphere. Hence, it can be extended to a continuous map
  $h_\sigma:|\sigma|\to S_{n+1}$ because $S_n\embed{n} S_{n+1}$. The
  required map $h:|\Sigma*v|\to S_{n+1}$ is now defined by
  $h\uhr |(\Sigma*v)^n|=h_n$ and $h\uhr |\sigma|=h_\sigma$, for every
  $(n+1)$-dimensional simplex $\sigma\in \Sigma*v$.
\end{proof}

\section{Nerves of sequences of covers}
\label{sec:finite-asph-sequ}

The set $\Sigma_S$ of all nonempty finite subsets of a set $S$ is a
simplicial complex. Another natural example is the \emph{nerve} of an
indexed cover $\{U_\alpha:\alpha\in \mathscr{A}\}$ of a
set $X$, which is the subcomplex of $\Sigma_\mathscr{A}$ defined by
\begin{equation}
  \label{eq:Select-Ext-v1:1}
  \mathscr{N}(\mathscr{A})= \left\{\sigma\in
    \Sigma_\mathscr{A}:\bigcap_{\alpha\in
      \sigma}U_\alpha\neq\emptyset\right\}.   
\end{equation}
Following Lefschetz \cite{MR0007093}, the intersection
$\bigcap_{\alpha\in \sigma}U_\alpha$ is called the \emph{kernel} of
$\sigma$, and is often denoted by $\ker[\sigma]=\bigcap_{\alpha\in
  \sigma}U_\alpha$. In case $\mathscr{U}$ is an unindexed cover of
$X$, its nerve is denoted by $\mathscr{N}(\mathscr{U})$. In this case,
$\mathscr{U}$ is indexed by itself, and each simplex $\sigma\in
\mathscr{N}(\mathscr{U})$ is merely a nonempty finite subset of
$\mathscr{U}$ with $\ker[\sigma]=\bigcap \sigma\neq
\emptyset$.\medskip

Here, an important role will be played by a subcomplex of the nerve of
a special indexed cover of $X$. The prototype of this subcomplex can be
found in some of the considerations in the proof of
\cite[Theorem 2.1]{uspenskij:98}. 

\begin{example}
  \label{example-Select-Ext-v12:1}
  Whenever $0<\kappa\leq \omega$, let $\mathscr{U}_n$, $n<\kappa$, be
  families of subsets of $X$ such that
  $\bigcup_{n<\kappa}\mathscr{U}_n$ is a cover of $X$, and let 
  $\bigsqcup_{n<\kappa}\mathscr{U}_n$ be the \emph{disjoint union} of
  these families (obtained, for instance, by identifying each
  $\mathscr{U}_n$ with $\mathscr{U}_n\times\{n\}$, $n<\kappa$). The
  nerve of this indexed cover of $X$ defines a natural simplicial
  complex
  \begin{equation}
    \label{eq:Select-Ext-v12:2}
    \mathscr{N}\left(\mathscr{U}_{<\kappa}\right)=
    \mathscr{N}\left(\bigsqcup_{n<\kappa}\mathscr{U}_n\right).  
  \end{equation}
  A simplex $\sigma\in \mathscr{N}(\mathscr{U}_{<\kappa})$ can be
  described as the disjoint union $\sigma=\bigsqcup_{i=1}^m \sigma_i$
  of finitely many simplices
  $\sigma_i\in \mathscr{N}(\mathscr{U}_{n_i})$,
  for $n_1<\dots <n_m<\kappa$, such that
  $\bigcap_{i=1}^m \ker[\sigma_i]\neq \emptyset$. The simplicial
  complex $\mathscr{N}(\mathscr{U}_{<\kappa})$ contains a natural
  subcomplex $\Delta(\mathscr{U}_{<\kappa})$, define by
  \begin{equation}
    \label{eq:Select-Ext-v12:4}
    \Delta(\mathscr{U}_{<\kappa})=\big\{\sigma\in
      \mathscr{N}(\mathscr{U}_{<\kappa}): 
      \card(\sigma\cap \mathscr{U}_n\times\{n\})\leq 1,\
      n<\kappa\big\}. 
  \end{equation}
  In other words, the subcomplex $\Delta(\mathscr{U}_{<\kappa})$
  consists of those simplices
  $\sigma\in \mathscr{N}\left(\mathscr{U}_{<\kappa}\right)$ which are
  composed of finitely many vertices
  $U_i=(U_{i},n_i)\in \mathscr{U}_{n_i}$, $i\leq m$, where
  $n_1<\dots <n_m<\kappa$. In the special case of $\kappa=n+1<\omega$,
  we will simply write
  $\mathscr{N}(\mathscr{U}_{\leq
    n})=\mathscr{N}(\mathscr{U}_{<\kappa})$ and
  $\Delta(\mathscr{U}_{\leq n})=\Delta(\mathscr{U}_{<\kappa})$.\qed
\end{example}
  
The subcomplex $\Delta(\mathscr{U}_{<\kappa})$ in Example
\ref{example-Select-Ext-v12:1} is naturally related to the definition
of $C$-spaces. The following proposition is an immediate consequence
of \eqref{eq:Select-Ext-v12:4}.

\begin{proposition}
  \label{proposition-Select-Ext-v16:3}
  Let $0<\kappa\leq \omega$ and $\mathscr{V}_n$, $n<\kappa$, be a
  sequence of pairwise-disjoint families of subsets of $X$, whose
  union forms a cover of $X$.  Then
  \begin{equation}
    \label{eq:Select-Ext-v12:3}
    \Delta(\mathscr{V}_{<\kappa})=\mathscr{N}(\mathscr{V}_{<\kappa}).
  \end{equation}
\end{proposition}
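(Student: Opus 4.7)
The plan is to unravel the definitions on both sides of \eqref{eq:Select-Ext-v12:3} and show that the cardinality constraint which distinguishes $\Delta(\mathscr{V}_{<\kappa})$ from $\mathscr{N}(\mathscr{V}_{<\kappa})$ is automatically satisfied when each family $\mathscr{V}_n$ is pairwise-disjoint. One inclusion is free: by \eqref{eq:Select-Ext-v12:4}, $\Delta(\mathscr{V}_{<\kappa})\subset \mathscr{N}(\mathscr{V}_{<\kappa})$ holds without any hypothesis.

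For the reverse inclusion, take an arbitrary simplex $\sigma\in \mathscr{N}(\mathscr{V}_{<\kappa})$, so that $\ker[\sigma]=\bigcap_{(V,n)\in \sigma}V\neq\emptyset$, and choose some $x\in \ker[\sigma]$. Fix $n<\kappa$ and suppose, toward contradiction, that $(V,n)$ and $(V',n)$ are two distinct vertices of $\sigma$ that both lie in $\mathscr{V}_n\times\{n\}$. Then $V\neq V'$ are both members of $\mathscr{V}_n$, yet $x\in V\cap V'$, contradicting the pairwise-disjointness of $\mathscr{V}_n$. Consequently $\card(\sigma\cap \mathscr{V}_n\times\{n\})\leq 1$ for every $n<\kappa$, which is exactly the membership condition for $\Delta(\mathscr{V}_{<\kappa})$ in \eqref{eq:Select-Ext-v12:4}.

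There is essentially no obstacle here: the proposition is a direct bookkeeping consequence of the defining formulas and the pairwise-disjointness hypothesis, which is precisely why the author labels it an immediate consequence of \eqref{eq:Select-Ext-v12:4}. The only point worth being careful about is to remember that vertices of $\mathscr{N}(\mathscr{U}_{<\kappa})$ are \emph{indexed} elements $(V,n)$ rather than raw subsets of $X$, so two vertices in $\mathscr{V}_n\times\{n\}$ being distinct really does mean that their underlying sets $V,V'\in \mathscr{V}_n$ are distinct, which is what makes the pairwise-disjointness bite.
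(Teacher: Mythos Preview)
Your proof is correct and is exactly the unfolding of ``immediate consequence of \eqref{eq:Select-Ext-v12:4}'' that the paper leaves implicit: one inclusion is by definition, and for the other you use a point of $\ker[\sigma]$ together with pairwise-disjointness of each $\mathscr{V}_n$ to rule out two vertices from the same level. Your remark about indexed vertices $(V,n)$ is also the right thing to keep track of.
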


For a simplicial complex $\Sigma$, a mapping $\Omega:X\sto \Sigma$
will be called \emph{simplicial-valued} if $\Omega(p)$ is a subcomplex
of $\Sigma$, for each $p\in X$. Such a mapping $\Omega:X\sto \Sigma$
generates a mapping $|\Omega|:X\sto |\Sigma|$ defined by
\begin{equation}
  \label{eq:Select-Ext-v16:2}
  |\Omega|(p)=\left|\Omega(p)\right|= \bigcup_{\sigma\in
    \Omega(p)}|\sigma|,\quad p\in X. 
\end{equation}
Here is a natural example.  Each indexed cover
$\{U_\alpha:\alpha\in \mathscr{A}\}$ of $X$ generates a natural
simplicial-valued mapping
$\Sigma_\mathscr{A}:X\sto \Sigma_\mathscr{A}$, defined by
\begin{equation}
  \label{eq:Select-Ext-v15:2}
  \Sigma_\mathscr{A}(p)=\left\{\sigma\in \Sigma_\mathscr{A}:
    p\in\bigcap_{\alpha\in
      \sigma}U_\alpha\right\}, \quad p\in X.  
\end{equation}
In fact, each $\Sigma_\mathscr{A}(p)$ is a subcomplex of
$\mathscr{N}(\mathscr{A})$, so
$\Sigma_\mathscr{A}:X\sto \mathscr{N}(\mathscr{A})$. \medskip

The benefit of the mapping in \eqref{eq:Select-Ext-v15:2} comes in the
setting of the simplicial complex $\Delta(\mathscr{U}_{<\kappa})$ in
Example \ref{example-Select-Ext-v12:1} associated to a sequence of
covers $\mathscr{U}_n$, $n<\kappa$, of $X$ for some
$0<\kappa\leq \omega$. Namely, we may define the corresponding
simplicial-valued mapping
$\Delta_{[\mathscr{U}_{<\kappa}]}:X\sto \Delta(\mathscr{U}_{<\kappa})$
by the same pattern as in \eqref{eq:Select-Ext-v15:2}, i.e.
\begin{equation}
  \label{eq:Select-Ext-v15:3}
  \Delta_{[\mathscr{U}_{<\kappa}]}(p)= \big\{\sigma\in
    \Delta(\mathscr{U}_{<\kappa}): 
    p\in\ker[\sigma]\big\}, \quad p\in X.
\end{equation}
Just like before, we will write
$\Delta_{[\mathscr{U}_{\leq n}]}=\Delta_{[\mathscr{U}_{<\kappa}]}$
whenever $\kappa=n+1<\omega$. \medskip

We now have the following natural relationship with aspherical
sequences of lower locally constant mappings.

\begin{proposition}
  \label{proposition-Select-Ext-v16:1}
  Let $\mathscr{U}_n$, $n<\omega$, be a sequence of covers of a set
  $X$. Then 
  \begin{equation}
    \label{eq:Select-Ext-v16:1}
    \Delta_{[\mathscr{U}_{\leq n}]}(p)*U\subset \Delta_{[\mathscr{U}_{\leq
      n+1}]}(p), \quad\text{whenever $p\in U\in \mathscr{U}_{n+1}$ and
    $n<\omega$.}
  \end{equation}
  Accordingly,
  $\left|\Delta_{[\mathscr{U}_{\leq n}]}\right|:X\sto
  |\Delta(\mathscr{U}_{\leq n})|\subset
  |\Delta(\mathscr{U}_{<\omega})|$, $n<\omega$, is an aspherical
  sequence of mappings.
\end{proposition}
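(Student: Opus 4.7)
The plan is to establish the combinatorial inclusion \eqref{eq:Select-Ext-v16:1} first by directly unwinding the definitions, and then to derive the asphericity of the sequence $\left|\Delta_{[\mathscr{U}_{\leq n}]}\right|$ as an immediate consequence, using the fact that the geometric realisation of a simplicial cone is contractible.

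For the first step, I would fix $p\in X$, $n<\omega$ and $U\in\mathscr{U}_{n+1}$ with $p\in U$, and analyse an arbitrary simplex of $\Delta_{[\mathscr{U}_{\leq n}]}(p)*U$. Such a simplex is either $\{U\}$, or some $\sigma\in \Delta_{[\mathscr{U}_{\leq n}]}(p)$, or $\sigma\cup\{U\}$ for such a $\sigma$. By \eqref{eq:Select-Ext-v12:4}, every $\sigma\in \Delta_{[\mathscr{U}_{\leq n}]}(p)$ has at most one vertex in each level $\mathscr{U}_k\times\{k\}$ with $k\leq n$ and carries no vertex at level $n+1$; adjoining $U$, viewed as the unique new vertex at level $n+1$, therefore produces a simplex still in $\Delta(\mathscr{U}_{\leq n+1})$. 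Since $p\in \ker[\sigma]\cap U=\ker[\sigma\cup\{U\}]$ and $p\in U=\ker[\{U\}]$, each of the three types of simplex lies in $\Delta_{[\mathscr{U}_{\leq n+1}]}(p)$, which is \eqref{eq:Select-Ext-v16:1}.

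For the second step, I would pass to geometric realisations. Given $p\in X$, the covering property of $\mathscr{U}_{n+1}$ produces some $U\in\mathscr{U}_{n+1}$ with $p\in U$, and \eqref{eq:Select-Ext-v16:1} together with the identity $|\Sigma|*v=|\Sigma*v|$ then yields
\[
\left|\Delta_{[\mathscr{U}_{\leq n}]}(p)\right| \subset \left|\Delta_{[\mathscr{U}_{\leq n}]}(p)\right|*v \subset \left|\Delta_{[\mathscr{U}_{\leq n+1}]}(p)\right|,
\]
where $v$ is the geometric vertex corresponding to $U$. The middle space is a cone, hence contractible, hence aspherical in every dimension. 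Consequently, every continuous map from the $n$-sphere into $\left|\Delta_{[\mathscr{U}_{\leq n}]}(p)\right|$ extends continuously over the $(n+1)$-ball with values in $\left|\Delta_{[\mathscr{U}_{\leq n}]}(p)\right|*v$, and \emph{a fortiori} into $\left|\Delta_{[\mathscr{U}_{\leq n+1}]}(p)\right|$. This is precisely the pointwise relation $\left|\Delta_{[\mathscr{U}_{\leq n}]}\right|\embed{n}\left|\Delta_{[\mathscr{U}_{\leq n+1}]}\right|$ needed for asphericity.

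There is no serious obstacle: once \eqref{eq:Select-Ext-v16:1} is read off the definitions, asphericity is immediate from the elementary topological fact that a cone is contractible. The only point to watch is that the auxiliary vertex $U$ depends on the point $p$, but since the relation $\embed{n}$ between set-valued mappings is defined pointwise, this causes no difficulty.
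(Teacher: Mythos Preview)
Your proof is correct and follows essentially the same approach as the paper's: you verify \eqref{eq:Select-Ext-v16:1} by noting that the new vertex $U=(U,n+1)$ lies outside $\bigsqcup_{k\leq n}\mathscr{U}_k$, and then deduce asphericity from the contractibility of the cone $\left|\Delta_{[\mathscr{U}_{\leq n}]}(p)\right|*U$. The only difference is that you spell out the case analysis and the kernel verification explicitly, whereas the paper compresses this into a single sentence.
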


\begin{proof}
  The property in \eqref{eq:Select-Ext-v16:1} follows from the fact
  that $U=(U,n+1)\notin \bigsqcup_{k=0}^n\mathscr{U}_k$, whenever 
  $p\in U\in \mathscr{U}_{n+1}$. Since
  $\left|\Delta_{[\mathscr{U}_{\leq
        n}]}(p)*U\right|=\left|\Delta_{[\mathscr{U}_{\leq
        n}]}(p)\right|*U$ is contractible, this implies that
  $\left|\Delta_{[\mathscr{U}_{\leq n}]}(p)\right|\embed{n} \left|
      \Delta_{[\mathscr{U}_{\leq n}]}(p)*U\right| \subset
    \left|\Delta_{[\mathscr{U}_{\leq n+1}]}(p)\right|$.  
\end{proof}

\begin{proposition}
  \label{proposition-Select-Ext-v16:2}
  Let $\mathscr{U}_0,\dots, \mathscr{U}_n$ be a sequence of
  point-finite open covers of a space $X$. Then the mapping
  $\left|\Delta_{[\mathscr{U}_{\leq n}]}\right|:X\sto
  |\Delta(\mathscr{U}_{\leq n})|$ is lower locally constant.
\end{proposition}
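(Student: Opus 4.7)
The plan is to establish a stronger local property from which lower local constancy is immediate: for every $p\in X$, exhibit an open neighborhood $W(p)$ of $p$ on which the mapping is pointwise non-decreasing, i.e.\ $|\Delta_{[\mathscr{U}_{\leq n}]}|(p)\subset |\Delta_{[\mathscr{U}_{\leq n}]}|(q)$ for every $q\in W(p)$. Granted this, for any compact $K\subset |\Delta(\mathscr{U}_{\leq n})|$, the condition $K\subset |\Delta_{[\mathscr{U}_{\leq n}]}|(p)$ automatically propagates throughout $W(p)$, so the set $\{p\in X: K\subset |\Delta_{[\mathscr{U}_{\leq n}]}|(p)\}$ is open. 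In fact this is a stronger conclusion than lower local constancy, since the role of compactness of $K$ disappears.

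To define $W(p)$, I would set
\[
W(p)=\bigcap\left\{U\in \bigsqcup_{k=0}^n \mathscr{U}_k: p\in U\right\},
\]
where each vertex $U\in \mathscr{U}_k\times\{k\}$ is identified with its underlying open subset of $X$ when taking the intersection. This is precisely where the point-finiteness hypothesis is used: point-finiteness of each individual $\mathscr{U}_k$ together with the finiteness of the index set $\{0,\dots,n\}$ ensures that $W(p)$ is a finite intersection of open subsets of $X$, hence an open neighborhood of $p$.

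The verification of the non-decreasing property is then a direct unwinding of \eqref{eq:Select-Ext-v15:3}: for $q\in W(p)$ and any $\sigma\in \Delta_{[\mathscr{U}_{\leq n}]}(p)$, each vertex $U\in\sigma$ contains $p$, hence contains $q$ by the choice of $W(p)$; therefore $q\in \ker[\sigma]$ and $\sigma\in \Delta_{[\mathscr{U}_{\leq n}]}(q)$. Passing to geometric realisations via \eqref{eq:Select-Ext-v16:2} gives the claimed inclusion. There is no real obstacle here: the only conceptual step is recognising that point-finiteness buys a genuine local monotonicity, which trivialises the compact test sets appearing in the definition of lower local constancy.
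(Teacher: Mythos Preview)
Your proposal is correct and follows essentially the same approach as the paper's proof: the paper defines the same neighborhood $V_p=\bigcap\{U\in\bigcup_{k\leq n}\mathscr{U}_k: p\in U\}$ (your $W(p)$), observes it is open by point-finiteness, and verifies $\Delta_{[\mathscr{U}_{\leq n}]}(p)\subset \Delta_{[\mathscr{U}_{\leq n}]}(q)$ for every $q\in V_p$ exactly as you do. The only cosmetic difference is that the paper leaves the final step from this monotonicity to lower local constancy implicit, whereas you spell it out.
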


\begin{proof}
  Whenever $p\in X$, the set
  $V_p=\bigcap\left\{U\in \bigcup_{k\leq n}\mathscr{U}_k:
    p\in U\right\}$ is a neighbourhood of $p$. Take  a point
  $q\in V_p$. Then by \eqref{eq:Select-Ext-v15:3}, 
  $\sigma\in \Delta_{[\mathscr{U}_{\leq n}]}(p)$ implies that
  $\sigma\in \Delta_{[\mathscr{U}_{\leq n}]}(q)$ because $q\in
  V_p\subset \ker[\sigma]$. Thus, 
  $\Delta_{[\mathscr{U}_{\leq n}]}(p)\subset
  \Delta_{[\mathscr{U}_{\leq n}]}(q)$ and, accordingly,
  $\left|\Delta_{[\mathscr{U}_{\leq n}]}\right|$ is lower locally
  constant.
\end{proof}

We conclude this section with a remark about the importance of
disjoint unions in the definition of the subcomplex
$\Delta(\mathscr{U}_{<\kappa})$ in Example
\ref{example-Select-Ext-v12:1}.

\begin{remark}
  \label{remark-Select-Ext-v19:1}
  For a sequence of covers $\mathscr{U}_n$, $n<\kappa$, of $X$, where
  $0<\kappa\leq \omega$, one can define the subcomplex
  $\Delta(\mathscr{U}_{<\kappa})\subset
  \mathscr{N}(\mathscr{U}_{<\kappa})$ by considering
  $\mathscr{N}(\mathscr{U}_{<\kappa})$ to be the nerve of the usual
  unindexed cover $\bigcup_{n<\kappa}\mathscr{U}_n$, rather than the
  disjoint union $\bigsqcup_{n<\kappa}\mathscr{U}_n$. However, this
  will not work to establish a property similar to that in Proposition
  \ref{proposition-Select-Ext-v16:1}, also for the essential results
  in the next sections (see, for instance, Theorem
  \ref{theorem-Select-Ext-v14:1} and Lemma
  \ref{lemma-Select-Ext-v4:1}). Namely, suppose that $\mathscr{U}_0$
  and $\mathscr{U}_1$ are covers of $X$ which contain elements
  $U_i\in \mathscr{U}_i$, $i=0,1$, with $U_0\cap U_1\neq \emptyset$
  and $U_i\notin \mathscr{U}_{1-i}$. Then
  $\sigma=\{U_0,U_1\}\in \Delta(\mathscr{U}_{\leq 1})$. However, if
  $\mathscr{U}_2$ is a cover of $X$ with $U_0,U_1\in \mathscr{U}_2$,
  and $\Delta(\mathscr{U}_{\leq 2})$ is defined on the basis of
  unindexed covers, then
  $\Delta(\mathscr{U}_{\leq 1})\not\subset \Delta(\mathscr{U}_{\leq
    2})$ because
  $\sigma=\{U_0,U_1\}\notin \Delta(\mathscr{U}_{\leq
    2})$. \qed
\end{remark}

\section{Skeletal selections}
\label{sec:skeletal-selections}

For mappings $\varphi,\psi:X\sto Y$, we will write
$\varphi\subset \psi$ to express that $\varphi(p)\subset \psi(p)$, for
every $p\in X$. In this case, the mapping $\varphi$ is called a
\emph{set-valued selection}, or a \emph{multi-selection}, for
$\psi$. Also, let us recall that a cover $\mathscr{F}$ of a space $X$
is called \emph{interior} if the collection of the interiors of the
elements of $\mathscr{F}$ is a cover of $X$.\medskip

The following theorem will be proved in this section.

\begin{theorem}
  \label{theorem-Select-Ext-v14:1}
  Let $X$ be a paracompact space and $\varphi_n:X\sto Y$, $n<\omega$,
  be an aspherical sequence of lower locally constant mappings in a
  space $Y$. Then there exists a sequence $\mathscr{F}_n$, $n<\omega$,
  of closed locally finite interior covers of $X$ and a continuous map
  ${f:\big|\Delta(\mathscr{F}_{<\omega})\big|\to Y}$ such that
  \begin{equation}
    \label{eq:Select-Ext-v14:1}
    f\circ \left|\Delta_{[\mathscr{F}_{\leq n}]}\right|\subset
    \varphi_n,\quad \text{for every $n<\omega$.} 
  \end{equation}
\end{theorem}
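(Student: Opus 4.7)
Plan. I will construct the covers $\mathscr{F}_n$ and continuous maps $f_n : |\Delta(\mathscr{F}_{\leq n})| \to Y$ (each extending the previous one) by joint induction on $n$, maintaining the skeletal invariant
\[
  f_n(|\sigma|) \subset \varphi_{\dim \sigma}(p), \qquad \sigma \in \Delta(\mathscr{F}_{\leq n}),\; p \in \ker[\sigma].
\]
The aspherical condition $\varphi_n \embed{n} \varphi_{n+1}$, applied to constant maps on $S^n$, forces the pointwise containment $\varphi_n(p) \subset \varphi_{n+1}(p)$; since $\dim \sigma \leq n$ whenever $\sigma \in \Delta(\mathscr{F}_{\leq n})$, the invariant then yields $f_n \circ |\Delta_{[\mathscr{F}_{\leq n}]}| \subset \varphi_n$, which is the required conclusion. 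The final map $f := \bigcup_n f_n$ on $|\Delta(\mathscr{F}_{<\omega})|$ will be continuous in the Whitehead topology because each $|\Delta(\mathscr{F}_{\leq n})|$ is closed.

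The base case is easy: taking $V_y := \{q : y \in \varphi_0(q)\}$ (open by lower local constancy, and covering $X$), paracompactness yields a closed locally finite interior refinement $\mathscr{F}_0$ with a selector $F \mapsto y(F)$, and $f_0((F,0)) := y(F)$ satisfies the invariant on the $0$-dimensional complex $\Delta(\mathscr{F}_{\leq 0})$. For the inductive step $n \to n+1$, fix $p \in X$; by local finiteness of the preceding covers, $\Delta_{[\mathscr{F}_{\leq n}]}(p)$ is a finite complex of dimension $\leq n$, and the restriction $g_p := f_n|_{|\Delta_{[\mathscr{F}_{\leq n}]}(p)|}$ maps the $k$-skeleton into $S_k := \varphi_k(p)$ by the invariant. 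Proposition~\ref{proposition-Select-Ext-v3:1} then delivers a cone extension $h_p : |\Delta_{[\mathscr{F}_{\leq n}]}(p) * v_p| \to Y$ with $h_p$ sending the $k$-skeleton into $\varphi_k(p)$ for $k \leq n{+}1$. Compactness of its image and lower local constancy of each $\varphi_k$ yield an open neighborhood $W_p$ of $p$ on which these skeletal containments persist for every $q \in W_p$. Separately, $O_p := X \setminus \bigcup\{F' \in \bigsqcup_{k \leq n} \mathscr{F}_k : p \notin F'\}$ is an open neighborhood of $p$ (by local finiteness) with the property that any element of $\bigsqcup_{k \leq n} \mathscr{F}_k$ meeting $O_p$ must contain $p$. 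Paracompactness now produces a closed locally finite interior refinement $\mathscr{F}_{n+1}$ of $\{O_p \cap W_p\}_{p \in X}$ together with a selector $F \mapsto p(F)$ such that $F \subset O_{p(F)} \cap W_{p(F)}$.

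For each $F \in \mathscr{F}_{n+1}$, set $f_{n+1}((F,n+1)) := h_{p(F)}(v_{p(F)})$. The crucial consequence of $F \subset O_{p(F)}$ is that every old simplex $\sigma \in \Delta(\mathscr{F}_{\leq n})$ with $\ker[\sigma] \cap F \neq \emptyset$ automatically satisfies $p(F) \in \ker[\sigma]$, because any $q \in \ker[\sigma] \cap F \subset O_{p(F)}$ lies in each element of $\sigma$, each of which must therefore contain $p(F)$. Hence $\sigma * v_{p(F)}$ is a cone-simplex in the domain of $h_{p(F)}$, and I extend $f_n$ to each new simplex $\tau = \sigma \cup \{(F, n+1)\}$ by identifying $(F, n+1)$ with $v_{p(F)}$ and applying $h_{p(F)}$; this is consistent with $f_n$ on $|\sigma|$ (since $h_{p(F)}$ extends $f_n$) and with the prescribed vertex value. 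Because each simplex of $\Delta(\mathscr{F}_{\leq n+1})$ contains at most one level-$(n{+}1)$ vertex, distinct $F$'s never co-occur and no overlap issues arise, giving continuity on $|\Delta(\mathscr{F}_{\leq n+1})|$; the invariant at $\tau$ for $q \in \ker[\tau] \subset F \subset W_{p(F)}$ follows from the skeletal property of $h_{p(F)}$ as preserved by $W_{p(F)}$. The main obstacle is precisely the construction of $\mathscr{F}_{n+1}$ refining both the "image-preserving" neighborhoods $W_p$ and the "kernel-compatibility" neighborhoods $O_p$; without $O_p$, the locally constructed $h_{p(F)}$'s fail to cover all relevant link simplices and the gluing into a single global continuous extension breaks down.
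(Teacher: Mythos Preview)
Your proposal is correct and follows essentially the same route as the paper: the paper packages the base case as Proposition~\ref{proposition-Select-Ext-v4:1} and the inductive step as Lemma~\ref{lemma-Select-Ext-v4:1}, using exactly your neighbourhoods $O_p$ and $W_p$ (there called $V_p\subset O_p$), the cone extension from Proposition~\ref{proposition-Select-Ext-v3:1}, and the same identification of the new vertex $(F,n{+}1)$ with the cone point over $\Delta_{[\mathscr{F}_{\leq n}]}(p(F))$ (formalised there as a simplicial embedding~$\ell$). The only cosmetic difference is that the paper carries the slightly stronger invariant $f_n(|\sigma|)\subset\varphi_k(p)$ for every $k\geq\dim\sigma$, whereas you keep only the case $k=\dim\sigma$ and recover the rest from your (correct) observation that $\varphi_k\embed{k}\varphi_{k+1}$ forces $\varphi_k(p)\subset\varphi_{k+1}(p)$.
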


Let us explicitly remark that, here,
$\Delta_{[\mathscr{F}_{\leq n}]}:X\sto \Delta(\mathscr{F}_{\leq
  n})\subset \Delta(\mathscr{F}_{<\omega})$
is the simplicial-valued mapping associated to the covers
$\mathscr{F}_k$, $k\leq n$, see \eqref{eq:Select-Ext-v15:3}, while
$f\circ \left|\Delta_{[\mathscr{F}_{\leq n}]}\right|$ is the composite
mapping
\begin{center}
  \begin{tikzcd}
    &&\lvert\Delta(\mathscr{F}_{<\omega})\rvert \arrow[d, "f"]\\
    {X} \arrow[urr, rightsquigarrow,
    "\left\lvert\Delta_{\skobi{\mathscr{F}_{\leq n}}}\right\rvert", bend left=15]
    \arrow[rr, rightsquigarrow, "f\circ
    \left\lvert\Delta_{\skobi{\mathscr{F}_{\leq n}}} \right\rvert"] &&
    Y
  \end{tikzcd}
\end{center}
According to the definition of
$\Delta_{[\mathscr{F}_{\leq n}]}:X\sto \Delta(\mathscr{F}_{\leq n})$,
see also \eqref{eq:Select-Ext-v16:2}, the property in
\eqref{eq:Select-Ext-v14:1} means that
$f(|\sigma|) \subset \varphi_n(p)$, for every
$\sigma \in \Delta(\mathscr{F}_{\leq n})$ and
$p\in\ker[\sigma]$.\medskip

Turning to the proof of Theorem \ref{theorem-Select-Ext-v14:1}, let us
observe that the simplicial complex $\Delta(\mathscr{F}_{\leq n})$ is
$n$-dimensional, see \eqref{eq:Select-Ext-v12:4} of Example
\ref{example-Select-Ext-v12:1}. In what follows, its $k$-skeleton will
be denoted by $\Delta^k(\mathscr{F}_{\leq n})$. In these terms,
following the idea of an $n$-skeletal selection in \cite{gutev:2018a},
we shall say that a continuous map
$f:|\Delta(\mathscr{F}_{\leq n})|\to Y$ is a \emph{skeletal selection}
for a sequence of mappings $\varphi_0,\dots,\varphi_n:X\sto Y$ if
\begin{equation}
  \label{eq:Select-Ext-v4:3}
  f(|\sigma|) \subset \varphi_k(p),\ \text{for every $\sigma \in
    \Delta^k(\mathscr{F}_{\leq n})$, 
    $k\leq n$, and $p\in\ker[\sigma]$.} 
\end{equation}
Precisely as in \eqref{eq:Select-Ext-v15:3}, for each $k\leq n$ we may
associate the simplicial-valued mapping
$\Delta^k_{[\mathscr{F}_{\leq n}]}:X\sto \Delta^k(\mathscr{F}_{\leq
  n})$, which assigns to each $p$ in $ X$ the $k$-skeleton
$\Delta^k_{[\mathscr{F}_{\leq n}]}(p)$ of the subcomplex
$\Delta_{[\mathscr{F}_{\leq n}]}(p)\subset \Delta(\mathscr{F}_{\leq
  n})$. Then the property in \eqref{eq:Select-Ext-v4:3} means that the
composite mapping
$f\circ \left|\Delta^k_{[\mathscr{F}_{\leq n}]}\right|:X\sto
\left|\Delta^k(\mathscr{F}_{\leq n})\right|$ is a set-valued selection
for $\varphi_k$, for every $k\leq n$.\medskip

Finally, let us recall that a \emph{simplicial map}
$g:\Sigma_1\to \Sigma_2$ is a map $g:\Sigma_1^0\to \Sigma_2^0$ between
the vertices of simplicial complexes $\Sigma_1$ and $\Sigma_2$ such
that $g(\sigma)\in \Sigma_2$, for each $\sigma\in \Sigma_1$. If such a
map $g:\Sigma_1^0\to \Sigma_2^0$ is bijective, then the inverse
$g^{-1}$ is also a simplicial map, and we say that $g$ is a
\emph{simplicial isomorphism}. If $g$ is only injective, then $g$
embeds $\Sigma_1$ into $\Sigma_2$, so that we may consider $\Sigma_1$
as a subcomplex of $\Sigma_2$. Each simplicial map
$g:\Sigma_1\to \Sigma_2$ generates a continuous map
$|g|:|\Sigma_1|\to |\Sigma_2|$ which is affine on each geometric
simplex $|\sigma|$, for $\sigma\in \Sigma_1$.

\begin{lemma}
  \label{lemma-Select-Ext-v4:1}
  Let $Y$ be a space, $\mathscr{F}_0,\dots,\mathscr{F}_{n}$ be a
  sequence of closed locally finite covers of a paracompact space $X$,
  and $\varphi_0,\dots, \varphi_{n+1}:X\sto Y$ be a sequence of lower
  locally constant mappings with $\varphi_k \embed{k}\varphi_{k+1}$
  for every $k\leq n$.  If ${f_n:|\Delta(\mathscr{F}_{\leq n})|\to Y}$
  is a skeletal selection for $\varphi_0,\dots,\varphi_n$, then there
  exists a closed locally finite interior cover $\mathscr{F}_{n+1}$ of
  $X$ and a continuous extension
  $f_{n+1}:|\Delta(\mathscr{F}_{\leq n+1})|\to Y$ of $f_n$ which is a
  skeletal selection for $\varphi_0,\dots,\varphi_{n+1}$.
\end{lemma}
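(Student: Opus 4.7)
The plan is to apply Proposition~\ref{proposition-Select-Ext-v3:1} locally at each point of $X$ and then globalise by paracompactness. The combinatorial observation driving this is that every simplex in $\Delta(\mathscr{F}_{\leq n+1})\setminus \Delta(\mathscr{F}_{\leq n})$ contains \emph{exactly one} vertex from $\mathscr{F}_{n+1}$, by the at-most-one clause in \eqref{eq:Select-Ext-v12:4}. Thus each such simplex has the form $\sigma\cup \{F\}$ for a uniquely determined $F\in \mathscr{F}_{n+1}$ and a (possibly empty) $\sigma\in \Delta(\mathscr{F}_{\leq n})$ with $\ker[\sigma]\cap F\neq \emptyset$, which reduces the task to extending $f_n$ cone-by-cone over the new vertices coming from $\mathscr{F}_{n+1}$.

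For every $p\in X$, I would set $\Sigma_p=\Delta_{[\mathscr{F}_{\leq n}]}(p)$, a finite subcomplex of $\Delta(\mathscr{F}_{\leq n})$ by point-finiteness of the $\mathscr{F}_k$. Since $f_n$ is a skeletal selection, $f_n(|\Sigma_p^k|)\subset \varphi_k(p)$ for each $k$, so introducing a fresh vertex $\hat{v}_p$ and invoking Proposition~\ref{proposition-Select-Ext-v3:1} (truncated to $\dim \Sigma_p$, using the aspherical chain $\varphi_0(p)\embed{0}\cdots\embed{n}\varphi_{n+1}(p)$) produces a continuous extension $h_p:|\Sigma_p*\hat{v}_p|\to Y$ with $h_p(|(\Sigma_p*\hat{v}_p)^k|)\subset \varphi_k(p)$ for every $k$. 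I would record the compact control sets $K_p^k=h_p(|(\Sigma_p*\hat{v}_p)^k|)\subset \varphi_k(p)$.

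Using lower local constancy of the $\varphi_k$ together with local finiteness and closedness of the members of $\bigcup_{k\leq n}\mathscr{F}_k$, I would next shrink $p$ to an open neighbourhood $W_p$ with two properties: (a) $K_p^k\subset \varphi_k(q)$ for every $q\in W_p$ and every $k$; and (b) every $F'\in \bigcup_{k\leq n}\mathscr{F}_k$ meeting $W_p$ already contains $p$. Property (a) is routine from lower local constancy and the compactness of $K_p^k$; for (b) one subtracts from a small neighbourhood of $p$ the finitely many closed $F'$ that do not contain $p$. The payoff of (b) is the inclusion $\Delta_{[\mathscr{F}_{\leq n}]}(q)\subset \Sigma_p$ for every $q\in W_p$. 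Paracompactness then supplies a closed locally finite interior cover $\mathscr{F}_{n+1}$ refining $\{W_p:p\in X\}$; for each $F\in \mathscr{F}_{n+1}$ I would pick a point $p(F)\in X$ with $F\subset W_{p(F)}$.

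Finally, define $f_{n+1}$ to coincide with $f_n$ on $|\Delta(\mathscr{F}_{\leq n})|$, and on each new simplex $\tau=\sigma\cup \{F\}$ to be $h_{p(F)}$ composed with the affine identification $|\tau|\cong |\sigma\cup \{\hat{v}_{p(F)}\}|$ that swaps $F$ and $\hat{v}_{p(F)}$. This is meaningful because, for any $q\in \ker[\sigma]\cap F\subset W_{p(F)}$, property (b) forces $p(F)\in \ker[\sigma]$, hence $\sigma\in \Sigma_{p(F)}$ and $\sigma\cup \{\hat{v}_{p(F)}\}$ is a genuine simplex of $\Sigma_{p(F)}*\hat{v}_{p(F)}$. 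Uniqueness of the vertex $F$ in its cone simplex, together with $h_{p(F)}\uhr |\Sigma_{p(F)}|=f_n\uhr |\Sigma_{p(F)}|$ and the Whitehead topology, give a well-defined continuous $f_{n+1}$. The skeletal-selection condition at a new $k$-simplex $\tau$ and any $p\in \ker[\tau]\subset F\subset W_{p(F)}$ reduces to $f_{n+1}(|\tau|)\subset K_{p(F)}^k\subset \varphi_k(p)$, by (a). The principal obstacle will be securing (b): the cone extensions $h_p$ are built \emph{before} the refinement $\mathscr{F}_{n+1}$ is chosen, so the combinatorial inclusion $\Sigma_q\subset \Sigma_p$ is what allows the precomputed $h_{p(F)}$ to be transferred faithfully to each actual $F\subset W_{p(F)}$.
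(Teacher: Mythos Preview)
Your proposal is correct and follows essentially the same approach as the paper's proof: the paper likewise builds, for each $p\in X$, a cone extension $f_p:|\Delta_p*p|\to Y$ via Proposition~\ref{proposition-Select-Ext-v3:1}, uses closed local finiteness to get the open set $O_p$ realising your property~(b), shrinks it to $V_p$ via lower local constancy for your property~(a), refines by paracompactness to obtain $\mathscr{F}_{n+1}$, and then transfers each $f_{p(F)}$ to the cone over $F$ by the simplicial identification $F\leftrightarrow p(F)$. The only cosmetic differences are that the paper uses $p$ itself as the cone vertex (rather than a fresh $\hat v_p$) and packages the new simplices containing $F$ into the subcomplex $\Delta_F\subset\Delta_{p(F)}$; note also that your truncation remark is unnecessary, since each $\mathscr{F}_k$ is a cover and hence $\Sigma_p$ is always exactly $n$-dimensional.
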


\begin{proof}
  Let
  $\Delta_{[\mathscr{F}_{\leq n}]}:X\sto \Delta(\mathscr{F}_{\leq n})$
  be the associated simplicial-valued mapping, defined as in
  \eqref{eq:Select-Ext-v15:3}. Whenever $p\in X$, the subcomplex
  \begin{equation}
    \label{eq:Select-Ext-v16:3}
    \Delta_p=\Delta_{[\mathscr{F}_{\leq n}]}(p)
  \end{equation}
  is $n$-dimensional such that, by \eqref{eq:Select-Ext-v4:3},
  $f_n\left(\left|\Delta^k_p\right|\right)\subset \varphi_k(p)$,
  $k\leq n$. Moreover, by hy\-pothesis,
  $\varphi_k(p) \embed{k}\varphi_{k+1}(p)$ for every $k\leq n$.  Since
  $p\notin \Delta_p^0$, it follows from Proposition
  \ref{proposition-Select-Ext-v3:1} that $f_n\uhr |\Delta_p|$ can be
  extended to a continuous map ${f_p:|\Delta_p* p|\to Y}$ such that
  \begin{equation}
    \label{eq:Select-Ext-v2:2}
    f_p\left(\left|(\Delta_p*
        p)^k\right|\right)\subset 
    \varphi_k(p),\quad \text{for every 
      $0\leq k\leq n+1$.}
  \end{equation}

  Since all covers are locally finite and closed, the point
  $p\in X$ is contained in the open set 
  \begin{equation}
    \label{eq:Select-Ext-v3:2}
    O_p=X\setminus \bigcup\left\{F\in \mathscr{F}_0\cup\dots
      \cup\mathscr{F}_n: p\notin F\right\}.
  \end{equation}
  For the same reason, $\Delta_p*p$ is a finite simplicial
  complex. Accordingly, each set
  $f_p\left(\left|(\Delta_p*p)^k\right|\right)$, $k\leq n+1$, is
  compact. Hence, by \eqref{eq:Select-Ext-v2:2} and the hypothesis
  that each mapping $\varphi_k$, $k\leq n+1$, is lower locally
  constant, we may shrink $O_p$ to a neighbourhood $V_p$ of $p$,
  defined by
  \begin{equation}
    \label{eq:Select-Ext-v1:3}
    V_{p}= \left\{x\in O_p:
      f_p\left(\left|(\Delta_p*p)^k\right|\right)\subset 
      \varphi_k(x),\ \text{for every $k\leq n+1$}\right\}.
  \end{equation}
  Finally, since $X$ is paracompact, it has an open locally finite
  cover $\mathscr{U}_{n+1}$ such that $\{V_{p}: p\in X\}$ is refined
  by the associated cover
  $\mathscr{F}_{n+1}=\left\{\overline{U}: U\in
    \mathscr{U}_{n+1}\right\}$ of the closures of the elements of
  $\mathscr{U}_{n+1}$. So, there is a map $p:\mathscr{F}_{n+1}\to X$
  such that
  \begin{equation}
    \label{eq:st-app-vgg-rev:4}
    F\subset V_{p(F)}\subset O_{p(F)},\quad \text{for every $F\in
      \mathscr{F}_{n+1}$.} 
  \end{equation}

  Having already defined the cover $\mathscr{F}_{n+1}$, we are going
  to extend $f_n$ to a skeletal selection
  $f_{n+1}:\big|\Delta(\mathscr{F}_{\leq n+1})\big|\to Y$ for the
  sequence $\varphi_0,\dots,\varphi_{n+1}$. To this end, take an
  $F\in \mathscr{F}_{n+1}$, and define the set
  \[
    \Delta_F=\big\{\tau\in \Delta(\mathscr{F}_{\leq n}):
      \tau\cup\{F\}\in \Delta(\mathscr{F}_{\leq n+1})\big\}.
  \]
  It is evident that $\Delta_F$ is a subcomplex of
  $\Delta(\mathscr{F}_{\leq n})$ with $F\notin \Delta_F^0$, hence the
  cone $\Delta_F*F$ is a subcomplex of
  $\Delta(\mathscr{F}_{\leq n+1})$.  Thus, to extend $f_n$ to a
  skeletal selection
  $f_{n+1}:\big|\Delta(\mathscr{F}_{\leq n+1})\big|\to Y$ for the
  sequence $\varphi_0,\dots,\varphi_{n+1}$, it now suffices to extend
  each $f_n\uhr |\Delta_F|$, $F\in \mathscr{F}_{n+1}$, to a continuous
  map $f_F:|\Delta_F*F|\to Y$ satisfying the condition in
  \eqref{eq:Select-Ext-v4:3} with respect to the simplices of
  $\Delta_F*F$. To this end, let us observe that
  \begin{equation}
    \label{eq:Select-Ext-v3:3}
    \Delta_F\subset \Delta_{p(F)}=\Delta_{[\mathscr{F}_{\leq n}]}(p(F)).
  \end{equation}
  Indeed, for $T\in\tau\in \Delta_F$, we have that
  $\emptyset\neq T\cap F\subset T\cap O_{p(F)}$, see
  \eqref{eq:st-app-vgg-rev:4}. Hence, by \eqref{eq:Select-Ext-v3:2},
  $p(F)\in T$ and according to \eqref{eq:Select-Ext-v15:3} and
  \eqref{eq:Select-Ext-v16:3}, $\tau\in \Delta_{p(F)}$.\smallskip

  We are now ready to define the required maps
  $f_F:|\Delta_F*F|\to Y$, $F\in \mathscr{F}_{n+1}$. Namely, by
  \eqref{eq:Select-Ext-v3:3}, we can embed $\Delta_{F}*F$ into the
  cone $\Delta_{p(F)}*p(F)$ by identifying $p(F)$ with $F$.  Let
  $\ell:\Delta_F*F\to \Delta_{p(F)}*p(F)$ be the corresponding
  simplicial embedding defined by $\ell\uhr \Delta^0_F$ to be the
  identity of $\Delta^0_F$, and $\ell(F)=p(F)$. Next, define a
  continuous extension $f_F:|\Delta_F*F|\to Y$ of $f_n\uhr |\Delta_F|$
  by $f_F=f_{p(F)}\circ |\ell|$. Take a simplex
  $\sigma\in (\Delta_F*F)^k$ for some $k\leq n+1$, and a point
  $x\in \ker[ \sigma]$. If $\sigma\in \Delta_F$, by the properties of
  $f_n$, see \eqref{eq:Select-Ext-v4:3},
  $f_F(|\sigma|)=f_n(|\sigma|)\subset \varphi_k(x)$. If $F\in \sigma$,
  then $x\in F\subset V_{p(F)}$ and, by \eqref{eq:Select-Ext-v1:3}, we
  have again that
  $f_F(|\sigma|)= f_{p(F)}(|\ell|(|\sigma|)) \subset
  \varphi_k(x)$. The proof is complete.
\end{proof}

Complementary to Lemma \ref{lemma-Select-Ext-v4:1} is the following
well-known property, see the proof of \cite[Theorem 2.1]{uspenskij:98}
and that of \cite[Theorem 3.1]{gutev:05}. The property itself was
stated explicitly in \cite[Proposition 3.2]{gutev:2018a}, and is an
immediate consequence of the definition of lower locally constant
mappings.

\begin{proposition}
  \label{proposition-Select-Ext-v4:1}
  If $X$ is a paracompact space and $\varphi:X\sto Y$ is a lower
  locally constant mapping, then there exists a closed locally finite
  interior cover $\mathscr{F}$ of $X$ and a
  \textup{(}continuous\textup{)} map
  $f:\Delta(\mathscr{F})=\mathscr{F}\to Y$ such that
  $f(F)\in \varphi(x)$, for every $x\in F\in \mathscr{F}$.
\end{proposition}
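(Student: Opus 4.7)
The plan is to exploit the lower locally constant hypothesis at singleton compact sets and then apply a standard paracompact refinement. First, for each $y\in Y$, the singleton $\{y\}\subset Y$ is compact, so by the very definition of a lower locally constant mapping the set
\[
W_y=\{x\in X: y\in \varphi(x)\}
\]
is open in $X$. Since each $\varphi(x)$ is nonempty, $\{W_y:y\in Y\}$ is an open cover of $X$.

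Next, I would invoke paracompactness of $X$ to produce a closed locally finite interior cover $\mathscr{F}$ refining $\{W_y:y\in Y\}$. Explicitly, first take an open locally finite refinement $\mathscr{U}=\{U_\alpha:\alpha\in A\}$ of this cover and choose for each $\alpha$ some $y_\alpha\in Y$ with $U_\alpha\subset W_{y_\alpha}$; then, using normality of $X$, shrink $\mathscr{U}$ to an open locally finite cover $\mathscr{U}'=\{U'_\alpha:\alpha\in A\}$ with $\overline{U'_\alpha}\subset U_\alpha$ for every $\alpha$. Setting $\mathscr{F}=\{\overline{U'_\alpha}:\alpha\in A\}$ gives a closed locally finite family refining $\{W_y\}$, and because each $U'_\alpha$ lies in the interior of $\overline{U'_\alpha}$, the interiors still cover $X$, so $\mathscr{F}$ is indeed an interior cover.

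Finally, define $f:\Delta(\mathscr{F})=\mathscr{F}\to Y$ by $f(\overline{U'_\alpha})=y_\alpha$. Continuity is vacuous because $\Delta(\mathscr{F})$ is the $0$-dimensional complex on the vertex set $\mathscr{F}$, and for any $x\in \overline{U'_\alpha}\subset U_\alpha\subset W_{y_\alpha}$ the definition of $W_{y_\alpha}$ gives $f(\overline{U'_\alpha})=y_\alpha\in \varphi(x)$, as required.

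The whole argument is really just an unpacking of definitions, the only moving part being the standard closed locally finite interior refinement produced by paracompactness. There is no serious obstacle here; this is why the paper rightly describes the proposition as an immediate consequence of the lower locally constant property.
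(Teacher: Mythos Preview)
Your argument is correct and is exactly the kind of unpacking the paper has in mind: it gives no proof beyond the remark that the proposition ``is an immediate consequence of the definition of lower locally constant mappings,'' and your use of the singleton compacta $\{y\}$ to produce the open cover $\{W_y\}$, followed by a standard paracompact closed-shrinking, realises that remark in full detail.
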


\begin{proof}[Proof of Theorem \ref{theorem-Select-Ext-v14:1}]
  Inductively, using Proposition \ref{proposition-Select-Ext-v4:1} and
  Lemma \ref{lemma-Select-Ext-v4:1}, there exists a sequence
  $\mathscr{F}_n$, $n<\omega$, of closed locally finite interior
  covers of $X$ and continuous maps
  $f_n:\big|\Delta(\mathscr{F}_{\leq n})\big|\to Y$, $n<\omega$, such
  that each $f_n$ is a skeletal selection for the sequence
  $\varphi_0,\dots,\varphi_n$, and each $f_{n+1}$ is an extension of
  $f_n$. Since
  $\Delta(\mathscr{F}_{<\omega})=\bigcup_{n<\omega}
  \Delta(\mathscr{F}_{\leq n})$, we may define a map
  $f:\big|\Delta(\mathscr{F}_{<\omega})\big|\to Y$ by
  $f\uhr \big|\Delta(\mathscr{F}_{\leq n})\big|= f_n$, for every
  $n<\omega$. Then $f$ is continuous and clearly has the property in
  \eqref{eq:Select-Ext-v14:1}.
\end{proof}

\section{Selections and canonical maps}
\label{sec:select-canon-maps}

Suppose that $X$ is a (paracompact) space with the property that for
any space $Y$, each aspherical sequence $\varphi_n:X\sto Y$,
$n<\omega$, of lower locally constant mappings admits a continuous
selection for its union $\bigcup_{n<\omega}\varphi_n$. As we will see
in the next section (Corollaries \ref{corollary-Select-Ext-v18:1},
\ref{corollary-Select-Ext-v18:2} and \ref{corollary-Select-Ext-v18:3}
and Example \ref{example-Select-Ext-v18:1}), each one of the following
statements determines a different dimension-like property of
$X$.\smallskip

\begin{enumerate}[label=\upshape{(\thesection.\arabic*)}]
\item\label{item:Select-Ext-v18:1} There exists an aspherical sequence
  $\varphi_n:X\sto Y$, $n<\omega$, of lower locally constant mappings
  such that no $\varphi_n$, $n<\omega$, has a continuous
  selection.\smallskip
\item\label{item:Select-Ext-v18:2} For each aspherical sequence
  $\varphi_k:X\sto Y$, $k<\omega$, of lower locally constant mappings
  there exists an $n<\omega$ such that $\varphi_n$ has a continuous
  selection.\smallskip
\item\label{item:Select-Ext-v18:3} There exists an $n<\omega$ such
  that for each aspherical sequence $\varphi_k:X\sto Y$, $k<\omega$,
  of lower locally constant mappings, the mapping $\varphi_n$ has a
  continuous selection.
\end{enumerate}\smallskip

Here, we deal with the following general result reducing these
selection problems only to simpli\-cial-valued mappings associated to
open locally finite covers of $X$.

\begin{theorem}
  \label{theorem-Select-Ext-vgg-rev:1}
  For a space $Y$, a paracompact space $X$ and $0<\mu\leq \omega+1$,
  the following are equivalent\textup{:}
  \begin{enumerate}
  \item\label{item:Select-Ext-v14:1} If $\varphi_n:X\sto Y$,
    $n<\omega$, is an aspherical sequence of lower locally constant
    mappings, then $\bigcup_{n<\kappa}\varphi_n$ has a
    continuous selection for some $0<\kappa<\mu$.
  \item\label{item:Select-Ext-v14:2} If $\mathscr{U}_n$, $n<\omega$,
    is a sequence of open locally finite covers of $X$, then
    ${\left|\Delta_{[\mathscr{U}_{<\kappa}]}\right|:X\sto
      \big|\Delta(\mathscr{U}_{<\kappa}) \big|}$ has a continuous
    selection for some $0<\kappa<\mu$.
  \end{enumerate}
\end{theorem}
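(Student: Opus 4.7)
The plan is to handle the two directions separately, each reducing to machinery already established in the paper.

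For (a)$\Rightarrow$(b): Given a sequence $\mathscr{U}_n$, $n<\omega$, of open locally finite covers of $X$, I would form the sequence
\[
\varphi_n=\big|\Delta_{[\mathscr{U}_{\leq n}]}\big|:X\sto \big|\Delta(\mathscr{U}_{<\omega})\big|,\quad n<\omega.
\]
Propositions \ref{proposition-Select-Ext-v16:1} and \ref{proposition-Select-Ext-v16:2} say exactly that $\{\varphi_n\}_{n<\omega}$ is an aspherical sequence of lower locally constant mappings into $Y=\big|\Delta(\mathscr{U}_{<\omega})\big|$. Condition (a) then yields some $0<\kappa<\mu$ and a continuous selection for $\bigcup_{n<\kappa}\varphi_n$. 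Since every simplex of $\Delta(\mathscr{U}_{<\kappa})$ lives in some $\Delta(\mathscr{U}_{\leq n})$ with $n<\kappa$ (trivially so for finite $\kappa$, and because simplices are finite when $\kappa=\omega$), one has $\bigcup_{n<\kappa}\varphi_n=\big|\Delta_{[\mathscr{U}_{<\kappa}]}\big|$, which is precisely the object whose selection is demanded by (b).

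For (b)$\Rightarrow$(a): Starting from an aspherical sequence $\varphi_n:X\sto Y$, $n<\omega$, of lower locally constant mappings, I invoke Theorem \ref{theorem-Select-Ext-v14:1} to extract a sequence $\mathscr{F}_n$ of closed locally finite interior covers of $X$ together with a continuous map $f:\big|\Delta(\mathscr{F}_{<\omega})\big|\to Y$ satisfying $f\circ \big|\Delta_{[\mathscr{F}_{\leq n}]}\big|\subset \varphi_n$ for every $n<\omega$. To feed this into (b), I replace each $\mathscr{F}_n$ by the open locally finite cover $\mathscr{U}_n=\{\operatorname{int}_X(F):F\in \mathscr{F}_n\}$, indexed by $\mathscr{F}_n$. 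Because $\operatorname{int}_X(F)\subset F$, the vertex bijection $\operatorname{int}_X(F)\leftrightarrow F$ induces a simplicial embedding $\iota:\Delta(\mathscr{U}_{<\omega})\hookrightarrow \Delta(\mathscr{F}_{<\omega})$ that carries $\Delta_{[\mathscr{U}_{\leq n}]}(p)$ into $\Delta_{[\mathscr{F}_{\leq n}]}(p)$ for every $p\in X$ and $n<\omega$. Applying (b) to $\mathscr{U}_n$ provides some $0<\kappa<\mu$ and a continuous selection $s:X\to \big|\Delta(\mathscr{U}_{<\kappa})\big|$ for $\big|\Delta_{[\mathscr{U}_{<\kappa}]}\big|$; the composition $f\circ |\iota|\circ s:X\to Y$ will be the desired continuous selection for $\bigcup_{n<\kappa}\varphi_n$.

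The only step that requires care is the final verification: for $p\in X$, the point $s(p)$ lies in $|\sigma|$ for some $\sigma\in \Delta_{[\mathscr{U}_{<\kappa}]}(p)$, which in turn sits in $\Delta(\mathscr{U}_{\leq n})$ for some $n<\kappa$ (by the same observation used in the forward direction), so $|\iota|(|\sigma|)\subset \big|\Delta_{[\mathscr{F}_{\leq n}]}(p)\big|$ and hence $f(|\iota|(s(p)))\in \varphi_n(p)\subset \bigcup_{m<\kappa}\varphi_m(p)$. I do not anticipate a genuine conceptual obstacle beyond this bookkeeping: the substantive work has already been done in Theorem \ref{theorem-Select-Ext-v14:1}, whose role is precisely to universally reduce selection problems for aspherical sequences of lower locally constant mappings to the canonical simplicial ones built from open locally finite covers.
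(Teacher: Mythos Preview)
Your proposal is correct and follows essentially the same route as the paper: Propositions~\ref{proposition-Select-Ext-v16:1} and~\ref{proposition-Select-Ext-v16:2} for (a)$\Rightarrow$(b), and Theorem~\ref{theorem-Select-Ext-v14:1} together with the passage from $\mathscr{F}_n$ to its interiors for (b)$\Rightarrow$(a). The only cosmetic difference is that where the paper invokes Proposition~\ref{proposition-Select-Ext-v21:1} to transfer the selection from $\left|\Delta_{[\mathscr{U}_{<\kappa}]}\right|$ to $\left|\Delta_{[\mathscr{F}_{<\kappa}]}\right|$, you unpack that proposition inline via your simplicial embedding $\iota$---this is exactly the refinement map $r$ of Proposition~\ref{proposition-Select-Ext-v21:1} in the special case $\mathscr{V}_n=\mathscr{U}_n$, $\mathscr{U}_n\leftarrow\mathscr{F}_n$.
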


The proof of Theorem \ref{theorem-Select-Ext-vgg-rev:1} is based on
the results of the previous two sections and the following
observation.

\begin{proposition}
  \label{proposition-Select-Ext-v21:1}
  Let $\mathscr{U}_n$, $n<\omega$, be a sequence of covers of $X$,
  $0<\kappa\leq \omega$, and $\mathscr{V}_n$, $n<\kappa$, be a sequence of
  families of subsets of $X$ such that each $\mathscr{V}_n$ refines
  $\mathscr{U}_n$ and $\bigcup_{n<\kappa}\mathscr{V}_n$ is a cover of
  $X$. If\/ $\left|\Delta_{[\mathscr{V}_{<\kappa}]}\right|:X\sto
  \left|\Delta(\mathscr{V}_{<\kappa})\right|$ has a continuous selection,
  then so does $\left|\Delta_{[\mathscr{U}_{<\kappa}]}\right|:X\sto
  \left|\Delta(\mathscr{U}_{<\kappa})\right|$.
\end{proposition}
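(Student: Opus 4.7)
The plan is to construct an explicit simplicial map $r\colon\Delta(\mathscr{V}_{<\kappa})\to \Delta(\mathscr{U}_{<\kappa})$ coming from the refinement data, and then compose the given continuous selection with its geometric realisation. Using the refinement hypothesis (and choice), for each $n<\kappa$ pick a function $r_n\colon\mathscr{V}_n\to \mathscr{U}_n$ with $V\subset r_n(V)$ for every $V\in \mathscr{V}_n$. These assemble into a vertex map
\[
  r\colon \bigsqcup_{n<\kappa}\mathscr{V}_n\to \bigsqcup_{n<\kappa}\mathscr{U}_n,\qquad r(V,n)=(r_n(V),n),
\]
which respects the indexing by $n$.

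Next I would verify that $r$ is simplicial as a map $\Delta(\mathscr{V}_{<\kappa})\to \Delta(\mathscr{U}_{<\kappa})$. Take $\sigma\in\Delta(\mathscr{V}_{<\kappa})$; by the definition \eqref{eq:Select-Ext-v12:4}, $\sigma$ has at most one vertex from each $\mathscr{V}_n\times\{n\}$, so its vertices have pairwise distinct indices. Thus $r$ is injective on $\sigma$ and $r(\sigma)$ in turn contains at most one vertex from each $\mathscr{U}_n\times\{n\}$. Writing $\sigma=\{(V_1,n_1),\dots,(V_m,n_m)\}$, the inclusions $V_i\subset r_{n_i}(V_i)$ give
\[
  \ker[\sigma]=\bigcap_{i=1}^m V_i\subset \bigcap_{i=1}^m r_{n_i}(V_i)=\ker[r(\sigma)],
\]
so $\ker[r(\sigma)]\neq\emptyset$ and $r(\sigma)\in \Delta(\mathscr{U}_{<\kappa})$. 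The induced continuous map $|r|\colon |\Delta(\mathscr{V}_{<\kappa})|\to |\Delta(\mathscr{U}_{<\kappa})|$ is then well defined (and affine on each geometric simplex).

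Finally, suppose $f\colon X\to |\Delta(\mathscr{V}_{<\kappa})|$ is a continuous selection for $|\Delta_{[\mathscr{V}_{<\kappa}]}|$. I would show that $|r|\circ f$ is a continuous selection for $|\Delta_{[\mathscr{U}_{<\kappa}]}|$. For $x\in X$, there is $\sigma\in \Delta_{[\mathscr{V}_{<\kappa}]}(x)$ with $f(x)\in|\sigma|$, hence $|r|(f(x))\in|r(\sigma)|$; the kernel inclusion established above, applied with $x\in \ker[\sigma]$, gives $x\in \ker[r(\sigma)]$, so $r(\sigma)\in \Delta_{[\mathscr{U}_{<\kappa}]}(x)$ and therefore $|r|(f(x))\in|\Delta_{[\mathscr{U}_{<\kappa}]}(x)|$, as required.

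There is no genuine obstacle in this argument; the only delicate point is confirming that $r$ is simplicial on $\Delta(\mathscr{V}_{<\kappa})$, which is exactly what the disjoint-union convention of Example~\ref{example-Select-Ext-v12:1} is designed to guarantee (compare Remark~\ref{remark-Select-Ext-v19:1}): the pairwise-distinct indexing of the vertices of any $\sigma\in \Delta(\mathscr{V}_{<\kappa})$ prevents $r$ from collapsing vertices within a single simplex, and the index-preserving form of $r$ is what keeps $r(\sigma)$ inside the subcomplex $\Delta(\mathscr{U}_{<\kappa})$ rather than merely in $\mathscr{N}(\mathscr{U}_{<\kappa})$.
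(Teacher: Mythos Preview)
Your proof is correct and follows essentially the same route as the paper's: both pick refinement maps $r_n:\mathscr{V}_n\to\mathscr{U}_n$, assemble them into an index-preserving simplicial map $r:\Delta(\mathscr{V}_{<\kappa})\to\Delta(\mathscr{U}_{<\kappa})$, and compose the given selection with $|r|$. You spell out the verification that $r$ is simplicial and the kernel-inclusion step in more detail than the paper does, but the argument is identical.
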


\begin{proof}
  Since each $\mathscr{V}_n$ refines $\mathscr{U}_n$, there are maps
  $r_n:\mathscr{V}_n\to \mathscr{U}_n$, $n<\kappa$, such that
  $V\subset r_n(V)$, for all $V\in \mathscr{V}_n$. Accordingly,
  $r=\bigsqcup_{n<\kappa}r_n:\Delta(\mathscr{V}_{<\kappa})\to
  \Delta(\mathscr{U}_{<\kappa})$ is a simplicial map with the property
  that $\sigma\subset r(\sigma)$, for each simplex
  $\sigma\in \Delta(\mathscr{V}_{<\kappa})$. In other words,
  $r\circ \Delta_{[\mathscr{V}_{<\kappa}]}\subset
  \Delta_{[\mathscr{U}_{<\kappa}]}$, see \eqref{eq:Select-Ext-v15:3}, and
  therefore
  $|r|\circ \left|\Delta_{[\mathscr{V}_{<\kappa}]}\right|\subset
  \left|\Delta_{[\mathscr{U}_{<\kappa}]}\right|$. Thus, if
  $h:X\to \left|\Delta(\mathscr{V}_{<\kappa})\right|$ is a continuous
  selection for
  $\left|\Delta_{[\mathscr{V}_{<\kappa}]}\right|:X\sto
  \left|\Delta(\mathscr{V}_{<\kappa})\right|$, then the composite map
  $f= |r|\circ h:X\to \left|\Delta(\mathscr{U}_{<\kappa})\right|$ is a
  continuous selection for
  $\left|\Delta_{[\mathscr{U}_{<\kappa}]}\right|:X\sto
  \left|\Delta(\mathscr{U}_{<\kappa})\right|$.
\end{proof}

\begin{proof}[Proof of Theorem \ref{theorem-Select-Ext-vgg-rev:1}] 
  The implication
  \ref{item:Select-Ext-v14:1}$\implies$\ref{item:Select-Ext-v14:2}
  follows from Propositions \ref{proposition-Select-Ext-v16:1} and
  \ref{proposition-Select-Ext-v16:2}. The converse follows easily from
  Theorem \ref{theorem-Select-Ext-v14:1} and Proposition
  \ref{proposition-Select-Ext-v21:1}. Namely, assume that
  \ref{item:Select-Ext-v14:2} holds and $\varphi_n:X\sto Y$,
  $n<\omega$, is as in \ref{item:Select-Ext-v14:1}. Since $X$ is
  paracompact, by Theorem \ref{theorem-Select-Ext-v14:1}, there exists
  a sequence $\mathscr{F}_n$, $n<\omega$, of closed locally finite
  interior covers of $X$ and a continuous map
  $f:\big|\Delta_{[\mathscr{F}_{<\omega}]}\big|\to Y$ satisfying
  \eqref{eq:Select-Ext-v14:1}. For each $n<\omega$, let
  $\mathscr{U}_n$ be the cover of $X$ composed by the interiors of the
  elements of $\mathscr{F}_n$. Then by \ref{item:Select-Ext-v14:2},
  the mapping
  $\left|\Delta_{[\mathscr{U}_{<\kappa}]}\right|:X\sto
  \big|\Delta(\mathscr{U}_{<\kappa}) \big|$ has a continuous selection
  for some $0<\kappa<\mu$. According to Proposition
  \ref{proposition-Select-Ext-v21:1}, this implies that the mapping
  $\left|\Delta_{[\mathscr{F}_{<\kappa}]}\right|:X\sto
  \left|\Delta(\mathscr{F}_{<\kappa})\right|$ also has a continuous
  selection $h:X\to \big|\Delta(\mathscr{F}_{<\kappa}) \big|$. Evidently,
  the composite map $g=f\circ h:X\to Y$ is a continuous selection for
  the mapping $\bigcup_{n<\kappa}\varphi_n$.
\end{proof}

The selection problem in \ref{item:Select-Ext-v14:2} of Theorem
\ref{theorem-Select-Ext-vgg-rev:1} is naturally related to the
existence of canonical maps for the disjoint union
$\bigsqcup_{n<\kappa}\mathscr{U}_n$ of such covers.  To this end, let us
briefly recall some terminology.  For a simplicial complex $\Sigma$
and a simplex $\sigma\in \Sigma$, we use $\langle\sigma\rangle$ to
denote the \emph{relative interior} of the geometric simplex
$|\sigma|$. For a vertex $v\in \Sigma^0$, the set
\setcounter{equation}{3}
\begin{equation}
  \label{eq:Select-Ext-v7:4}
  \st\langle v\rangle
  =\bigcup_{v\in \sigma\in \Sigma}\langle\sigma\rangle,
\end{equation}
is called the \emph{open star} of the vertex $v\in \Sigma^0$. One can
easily see that $\st\langle v\rangle$ is open in $|\Sigma|$ because
$\st\langle v\rangle=|\Sigma|\setminus\bigcup_{v\notin \sigma\in
  \Sigma}|\sigma|$. In these terms, for an indexed cover
$\{U_\alpha:\alpha\in \mathscr{A}\}$ of a space $X$, a continuous map
$f:X\to |\mathscr{N}(\mathscr{A})|$ is called \emph{canonical} for
  $\{U_\alpha:\alpha\in \mathscr{A}\}$ if
\begin{equation}
  \label{eq:Select-Ext-v7:1}
  f^{-1}(\st\langle \alpha\rangle)\subset U_\alpha,\quad \text{for
    every $\alpha\in 
    \mathscr{A}$.} 
\end{equation}      

It is well known that each open cover of a paracompact space admits a
canonical map, which follows from the fact that such a cover has an
index-subordinated partition of unity. The interested reader is
referred to \cite[Section 2]{gutev:2018a} which contains a brief
review of several facts about canonical maps and partitions of unity.
Here, we are interested in a selection interpretation of canonical
maps.  Namely, in terms of the simplicial-valued mapping
$\Sigma_\mathscr{A}:X\sto \mathscr{N}(\mathscr{A})$ associated to the
cover $\{U_\alpha:\alpha\in \mathscr{A}\}$, see
\eqref{eq:Select-Ext-v15:2}, we have the following characterisation of
canonical maps; for unindexed covers it was obtained in
\cite[Proposition 2.5]{gutev:2018a} (see also Dowker
\cite{dowker:47}), but the proof for indexed covers is essentially the
same.

\begin{proposition}
  \label{proposition-Select-Ext-v11:1}
  A map $f:X\to |\mathscr{N}(\mathscr{A})|$ is canonical for a cover
  $\{U_\alpha:\alpha\in \mathscr{A}\}$ of a space $X$ if and only if
  it is a continuous selection for the associated mapping
  $|\Sigma_\mathscr{A}|:X\sto |\mathscr{N}(\mathscr{A})|$.
\end{proposition}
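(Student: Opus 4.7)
The plan is to reduce both the canonical condition and the selection condition to the same pointwise statement involving the \emph{carrier simplex} of $f(p)$. Recall that the relative interiors $\langle\sigma\rangle$, $\sigma\in \mathscr{N}(\mathscr{A})$, partition the geometric realisation $|\mathscr{N}(\mathscr{A})|$; so each point $q\in |\mathscr{N}(\mathscr{A})|$ belongs to $\langle\sigma\rangle$ for exactly one simplex, which I will denote by $\sigma(q)$. Two standard facts about carriers will be used repeatedly: by the very definition of the open star in \eqref{eq:Select-Ext-v7:4}, one has $q\in \st\langle\alpha\rangle$ if and only if $\alpha\in \sigma(q)$; and since each geometric simplex $|\tau|$ is the disjoint union of the relative interiors of its faces, one has $q\in |\tau|$ if and only if $\sigma(q)\subset \tau$.

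Fix a continuous map $f:X\to |\mathscr{N}(\mathscr{A})|$ and a point $p\in X$. Because $\Sigma_\mathscr{A}(p)$ is, by construction, a subcomplex of $\mathscr{N}(\mathscr{A})$, the two observations above yield the chain of equivalences
\[
f(p)\in |\Sigma_\mathscr{A}(p)|\ \Longleftrightarrow\ \sigma(f(p))\in \Sigma_\mathscr{A}(p)\ \Longleftrightarrow\ p\in \bigcap_{\alpha\in \sigma(f(p))}U_\alpha,
\]
where the last equivalence is merely the definition \eqref{eq:Select-Ext-v15:2} of $\Sigma_\mathscr{A}(p)$. Consequently, $f$ is a selection for $|\Sigma_\mathscr{A}|$ precisely when, for every $p\in X$ and every $\alpha\in \sigma(f(p))$, one has $p\in U_\alpha$.

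On the other hand, $f$ is canonical precisely when the implication $f(p)\in \st\langle\alpha\rangle\Rightarrow p\in U_\alpha$ holds for every $\alpha\in \mathscr{A}$ and every $p\in X$; and by the first observation about carriers, this reads exactly as: for every $p\in X$ and every $\alpha\in \sigma(f(p))$, one has $p\in U_\alpha$. The two conditions thus coincide, and the proposition follows. Continuity of $f$ is not involved in the pointwise equivalence and is simply inherited on either side.

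There is no real obstacle beyond bookkeeping with carriers. The only mildly delicate point is invoking the hereditary nature of the subcomplex $\Sigma_\mathscr{A}(p)$ to pass from ``$f(p)\in |\sigma|$ for some $\sigma\in \Sigma_\mathscr{A}(p)$'' to ``$\sigma(f(p))\in \Sigma_\mathscr{A}(p)$''; once that is noted, the rest is a direct unpacking of \eqref{eq:Select-Ext-v7:4}, \eqref{eq:Select-Ext-v7:1} and \eqref{eq:Select-Ext-v15:2}.
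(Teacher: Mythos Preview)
Your argument is correct. The reduction via the carrier simplex $\sigma(f(p))$ is exactly the right device: the equivalence $q\in \st\langle\alpha\rangle \Leftrightarrow \alpha\in\sigma(q)$ unpacks \eqref{eq:Select-Ext-v7:4} and \eqref{eq:Select-Ext-v7:1}, while the hereditary property of the subcomplex $\Sigma_\mathscr{A}(p)$ gives $f(p)\in|\Sigma_\mathscr{A}(p)|\Leftrightarrow \sigma(f(p))\in\Sigma_\mathscr{A}(p)$, which in turn unpacks \eqref{eq:Select-Ext-v15:2}. Both sides then collapse to the same condition ``$p\in U_\alpha$ for every $\alpha\in\sigma(f(p))$'', and continuity carries over trivially.

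As for comparison with the paper: the paper does not supply its own proof of this proposition. It merely remarks that the unindexed case was established in \cite[Proposition~2.5]{gutev:2018a} (with a nod to Dowker \cite{dowker:47}) and that the indexed case is proved in the same way. Your write-up is precisely the kind of carrier-based argument one expects for that result, so there is nothing to contrast.
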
       

In the special case of a sequence of open covers $\mathscr{U}_n$,
$n<\omega$, a canonical map
$f:X\to \mathscr{N}(\mathscr{U}_{<\omega})$ for the disjoint union
$\bigsqcup_{n<\omega}\mathscr{U}_n$ will be called \emph{canonical}
for the sequence $\mathscr{U}_n$, $n<\omega$.  We now have the
following further reduction of the selection problem for aspherical
sequences of mappings, which is an immediate consequence of Theorem
\ref{theorem-Select-Ext-vgg-rev:1} and Proposition
\ref{proposition-Select-Ext-v11:1}. 

\begin{corollary}
  \label{corollary-Select-Ext-v22:1}
  For a space $Y$, a paracompact space $X$ and $0<\mu\leq \omega+1$,
  the following are equivalent\textup{:}
  \begin{enumerate}
  \item\label{item:Select-Ext-v18:4} If $\varphi_n:X\sto Y$,
    $n<\omega$, is an aspherical sequence of lower locally constant
    mappings, then $\bigcup_{n<\kappa}\varphi_n$ has a
    continuous selection for some $0<\kappa<\mu$.
  \item\label{item:Select-Ext-v18:5} Each sequence $\mathscr{U}_n$,
    $n<\omega$, of open covers of $X$ admits a canonical map
    $f:X\to \big|\Delta(\mathscr{U}_{<\kappa}) \big|\subset
    \left|\mathscr{N}(\mathscr{U}_{<\omega})\right|$ for some
    $0<\kappa<\mu$.
  \end{enumerate}
\end{corollary}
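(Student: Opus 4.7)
The plan is to chain together Theorem \ref{theorem-Select-Ext-vgg-rev:1} and Proposition \ref{proposition-Select-Ext-v11:1}, passing through continuous selections of the mappings $\left|\Delta_{[\mathscr{U}_{<\kappa}]}\right|$ as a common bridge between the two formulations. The work is essentially a matter of bookkeeping: (a) is already equivalent, via Theorem \ref{theorem-Select-Ext-vgg-rev:1}, to the existence of a continuous selection for $\left|\Delta_{[\mathscr{U}_{<\kappa}]}\right|:X\sto \left|\Delta(\mathscr{U}_{<\kappa})\right|$ whenever $\mathscr{U}_n$, $n<\omega$, is a sequence of open locally finite covers of $X$; Proposition \ref{proposition-Select-Ext-v11:1} will then identify such selections with canonical maps.

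The first issue is to upgrade the quantification from ``open locally finite covers'' in Theorem \ref{theorem-Select-Ext-vgg-rev:1} to the ``open covers'' in statement \ref{item:Select-Ext-v18:5}. This is handled by Proposition \ref{proposition-Select-Ext-v21:1}: since $X$ is paracompact, every sequence of open covers $\mathscr{U}_n$ admits a sequence of open locally finite refinements $\mathscr{V}_n$, and a continuous selection for $\left|\Delta_{[\mathscr{V}_{<\kappa}]}\right|$ is mapped by the induced simplicial map to a continuous selection for $\left|\Delta_{[\mathscr{U}_{<\kappa}]}\right|$. Thus the existence problem over arbitrary open covers and over open locally finite covers is the same.

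The second issue is to interpret, for a fixed sequence $\mathscr{U}_n$, $n<\omega$, of open covers of $X$, a continuous selection $f:X\to\left|\Delta(\mathscr{U}_{<\kappa})\right|$ for $\left|\Delta_{[\mathscr{U}_{<\kappa}]}\right|$ as a canonical map for $\bigsqcup_{n<\omega}\mathscr{U}_n$ that takes values in $\left|\Delta(\mathscr{U}_{<\kappa})\right|\subset \left|\mathscr{N}(\mathscr{U}_{<\omega})\right|$. Applying Proposition \ref{proposition-Select-Ext-v11:1} to the indexed cover $\bigsqcup_{n<\omega}\mathscr{U}_n$, a continuous map $f:X\to\left|\mathscr{N}(\mathscr{U}_{<\omega})\right|$ is canonical precisely when it is a continuous selection for $\left|\Sigma_{\mathscr{U}_{<\omega}}\right|:X\sto \left|\mathscr{N}(\mathscr{U}_{<\omega})\right|$. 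By the definitions \eqref{eq:Select-Ext-v15:2} and \eqref{eq:Select-Ext-v15:3}, one has the pointwise identity $\left|\Sigma_{\mathscr{U}_{<\omega}}(p)\right|\cap \left|\Delta(\mathscr{U}_{<\kappa})\right|=\left|\Delta_{[\mathscr{U}_{<\kappa}]}(p)\right|$, so the image of such a canonical map lies in $\left|\Delta(\mathscr{U}_{<\kappa})\right|$ if and only if $f$ is already a selection for $\left|\Delta_{[\mathscr{U}_{<\kappa}]}\right|$.

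Putting the three steps together yields the equivalence $\ref{item:Select-Ext-v18:4}\Longleftrightarrow \ref{item:Select-Ext-v18:5}$. I do not anticipate a genuine obstacle: the only mildly delicate point is the pointwise identity in the third step, and this is a direct consequence of the definitions. Everything else is a matter of invoking Theorem \ref{theorem-Select-Ext-vgg-rev:1}, Proposition \ref{proposition-Select-Ext-v11:1} and Proposition \ref{proposition-Select-Ext-v21:1} in sequence.
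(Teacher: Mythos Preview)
Your proposal is correct and follows the same route the paper takes: the corollary is declared an ``immediate consequence of Theorem~\ref{theorem-Select-Ext-vgg-rev:1} and Proposition~\ref{proposition-Select-Ext-v11:1}'', and you have simply spelled out the two implicit details---using paracompactness and Proposition~\ref{proposition-Select-Ext-v21:1} to pass from arbitrary open covers to locally finite ones, and the pointwise identity $\left|\Sigma_{\mathscr{U}_{<\omega}}(p)\right|\cap\left|\Delta(\mathscr{U}_{<\kappa})\right|=\left|\Delta_{[\mathscr{U}_{<\kappa}]}(p)\right|$ that links canonical maps landing in $\left|\Delta(\mathscr{U}_{<\kappa})\right|$ with selections for $\left|\Delta_{[\mathscr{U}_{<\kappa}]}\right|$. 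There is no substantive difference in approach.
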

  
\section{Dimension and canonical maps}
\label{sec:canon-maps-sequ}

Here, we finalise the proof of Theorem \ref{theorem-Select-Ext-v6:2}
by showing that the property $C$ is equivalent to the existence of
canonical maps for special covers. To this end, for a sequence
$\mathscr{U}_n$, $n<\omega$, of open covers of $X$ and
$0<\kappa\leq \omega$, we shall say that a sequence $\mathscr{V}_n$,
$n<\kappa$, of pairwise-disjoint families of open subsets $X$ is a
\emph{$C$-refinement} of $\mathscr{U}_n$, $n<\omega$, if each family
$\mathscr{V}_n$ refines $\mathscr{U}_n$ and
$\bigcup_{n<\kappa}\mathscr{V}_n$ covers $X$.

\begin{theorem}
  \label{theorem-Select-Ext-v18:2}
  For a paracompact space $X$ and $0<\mu\leq \omega+1$, the following
  are equivalent\textup{:}
  \begin{enumerate}
  \item\label{item:Select-Ext-v18:6} Each sequence $\mathscr{U}_n$,
    $n<\omega$, of open covers of $X$ has a $C$-refinement
    $\mathscr{V}_n$, $n<\kappa$, for some $0<\kappa<\mu$.
  \item\label{item:Select-Ext-v18:7} Each sequence $\mathscr{U}_n$,
    $n<\omega$, of open covers of $X$ admits a canonical map
    $f:X\to \big|\Delta(\mathscr{U}_{<\kappa}) \big|$ for some
    $0<\kappa<\mu$.
  \end{enumerate}
\end{theorem}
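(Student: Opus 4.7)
The plan is to prove the equivalence directly, leveraging Propositions \ref{proposition-Select-Ext-v16:3}, \ref{proposition-Select-Ext-v21:1} and \ref{proposition-Select-Ext-v11:1}, together with the standard fact that every open cover of a paracompact space admits a canonical map via an index-subordinated partition of unity.

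For \ref{item:Select-Ext-v18:6}$\Rightarrow$\ref{item:Select-Ext-v18:7}, given a sequence of open covers $\mathscr{U}_n$, $n<\omega$, take a $C$-refinement $\mathscr{V}_n$, $n<\kappa$. Since the $\mathscr{V}_n$ are pairwise-disjoint open families whose union covers $X$, Proposition \ref{proposition-Select-Ext-v16:3} gives $\Delta(\mathscr{V}_{<\kappa}) = \mathscr{N}(\mathscr{V}_{<\kappa})$, and paracompactness of $X$ yields a canonical map $h:X \to |\mathscr{N}(\mathscr{V}_{<\kappa})| = |\Delta(\mathscr{V}_{<\kappa})|$ for the indexed cover $\bigsqcup_{n<\kappa}\mathscr{V}_n$. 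Choose refining maps $r_n:\mathscr{V}_n\to \mathscr{U}_n$ with $V\subset r_n(V)$; since each simplex of $\Delta(\mathscr{V}_{<\kappa})$ has vertices with pairwise distinct levels, the disjoint union $r=\bigsqcup_{n<\kappa}r_n$ is a simplicial map whose image lands in $\Delta(\mathscr{U}_{<\kappa})$ (the kernel of $r(\sigma)$ is non-empty because it contains $\bigcap\sigma$). I claim the composition $|r|\circ h : X\to |\Delta(\mathscr{U}_{<\kappa})|$ is the desired canonical map: by Proposition \ref{proposition-Select-Ext-v11:1}, $h$ is a continuous selection for $|\Delta_{[\mathscr{V}_{<\kappa}]}|$; by the argument of Proposition \ref{proposition-Select-Ext-v21:1}, $|r|\circ h$ is then a selection for $|\Delta_{[\mathscr{U}_{<\kappa}]}|$, and Proposition \ref{proposition-Select-Ext-v11:1} upgrades this back to canonicity.

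For the converse \ref{item:Select-Ext-v18:7}$\Rightarrow$\ref{item:Select-Ext-v18:6}, suppose $f:X\to |\Delta(\mathscr{U}_{<\kappa})|$ is canonical for $\mathscr{U}_n$, $n<\omega$. For each $n<\kappa$, set $\mathscr{V}_n = \{f^{-1}(\st\langle U\rangle) : U\in \mathscr{U}_n\}$. Each $\mathscr{V}_n$ is a family of open subsets of $X$ refining $\mathscr{U}_n$, by continuity of $f$ and the canonicity condition $f^{-1}(\st\langle U\rangle)\subset U$. The crucial observation is the definition \eqref{eq:Select-Ext-v12:4}: in $\Delta(\mathscr{U}_{<\kappa})$, no simplex contains two distinct vertices of the same level, so for distinct $U,U'\in\mathscr{U}_n$ the open stars $\st\langle U\rangle$ and $\st\langle U'\rangle$ share no common simplex-interior and are therefore disjoint in $|\Delta(\mathscr{U}_{<\kappa})|$; pulling back, $\mathscr{V}_n$ is pairwise-disjoint. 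Coverage of $X$ follows since for any $x\in X$, $f(x)\in \langle\sigma\rangle$ for some $\sigma\in\Delta(\mathscr{U}_{<\kappa})$, and then $x\in f^{-1}(\st\langle U\rangle)$ for any vertex $U\in\sigma$.

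There is no serious obstacle here; the one point requiring care is verifying that the induced simplicial map $r$ stays inside $\Delta$ rather than drifting into the larger nerve. This is precisely where the disjoint-union indexing of the vertex set matters (cf.\ Remark \ref{remark-Select-Ext-v19:1}): since vertices within a simplex of $\Delta(\mathscr{V}_{<\kappa})$ are tagged with pairwise distinct levels, applying the $r_n$ level-wise cannot collapse any two of them into a common $\mathscr{U}_n$.
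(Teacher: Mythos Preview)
Your proof is correct and follows essentially the same route as the paper's own argument. Both directions match: in \ref{item:Select-Ext-v18:6}$\Rightarrow$\ref{item:Select-Ext-v18:7} the paper likewise passes through a canonical map for the $C$-refinement, invokes Proposition~\ref{proposition-Select-Ext-v16:3} to identify $\Delta(\mathscr{V}_{<\kappa})$ with the nerve, and then pushes forward via the simplicial refinement map (packaged as Proposition~\ref{proposition-Select-Ext-v21:1}); in \ref{item:Select-Ext-v18:7}$\Rightarrow$\ref{item:Select-Ext-v18:6} the paper defines the same families $\mathscr{V}_n=\{f^{-1}(\st\langle U\rangle):U\in\mathscr{U}_n\}$ and uses the partition of $|\Delta(\mathscr{U}_{<\kappa})|$ by relative interiors together with \eqref{eq:Select-Ext-v12:4} to obtain pairwise-disjointness.
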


\begin{proof}
  To see that
  \ref{item:Select-Ext-v18:6}$\implies$\ref{item:Select-Ext-v18:7},
  take a sequence $\mathscr{U}_n$, $n<\omega$, of open covers of
  $X$. Then by \ref{item:Select-Ext-v18:6}, $\mathscr{U}_n$,
  $n<\omega$, admits a $C$-refinement $\mathscr{V}_n$, $n<\kappa$, for
  some $0<\kappa<\mu$.  Let $\mathscr{N}(\mathscr{V}_{<\kappa})$ be
  the nerve of the disjoint union $\bigsqcup_{n<\kappa}\mathscr{V}_n$,
  see \eqref{eq:Select-Ext-v12:2} of
  Example~\ref{example-Select-Ext-v12:1}, and
  $\Sigma_{[\mathscr{V}_{<\kappa}]}:X\sto
  \mathscr{N}(\mathscr{V}_{<\kappa})$ be the simplicial-valued mapping
  associated to this nerve, see \eqref{eq:Select-Ext-v15:2}.  Since
  $X$ is paracompact, the indexed cover
  $\bigsqcup_{n<\kappa}\mathscr{V}_n$ has a canonical map.  Hence, by
  Proposition \ref{proposition-Select-Ext-v11:1}, the mapping
  $\left|\Sigma_{[\mathscr{V}_{<\kappa}]}\right|:X\sto
  \left|\mathscr{N}(\mathscr{V}_{<\kappa})\right|$ has a continuous
  selection. However, by definition, each family $\mathscr{V}_n$,
  $n<\kappa$, is pairwise-disjoint. Therefore, by
  \eqref{eq:Select-Ext-v12:3} of Proposition
  \ref{proposition-Select-Ext-v16:3},
  $\Delta(\mathscr{V}_{<\kappa})=\mathscr{N}(\mathscr{V}_{<\kappa})$
  and, consequently,
  $\Delta_{[\mathscr{V}_{<\kappa}]}=\Sigma_{[\mathscr{V}_{<\kappa}]}$. Thus,
  $\left|\Delta_{[\mathscr{V}_{<\kappa}]}\right|:X\sto
  \left|\Delta(\mathscr{V}_{<\kappa})\right|$ has a continuous
  selection and, according to Proposition
  \ref{proposition-Select-Ext-v21:1}, the mapping
  $\left|\Delta_{[\mathscr{U}_{<\kappa}]}\right|:X\sto
  \left|\Delta(\mathscr{U}_{<\kappa})\right|$ has a continuous
  selection as well.  Finally, by Proposition
  \ref{proposition-Select-Ext-v11:1}, each continuous selection for
  $\left|\Delta_{[\mathscr{U}_{<\kappa}]}\right|$ is as required in
  \ref{item:Select-Ext-v18:7}. \medskip
  
  Conversely, let $\mathscr{U}_n$, $n<\omega$, and
  ${f:X\to \left|\Delta(\mathscr{U}_{<\kappa})\right|}$ be as in
  \ref{item:Select-Ext-v18:7} for some ${0<\kappa<\mu}$.  Define
  $\mathscr{V}_n=\left\{f^{-1}(\st\langle U\rangle): U\in
    \mathscr{U}_n\right\}$, $n<\kappa$. Since $f$ is continuous,
  $\mathscr{V}_n$ is an open family in $X$; moreover, by
  \eqref{eq:Select-Ext-v7:1}, it refines $\mathscr{U}_n$. It is also
  evident that $\bigcup_{n<\mu}\mathscr{V}_n$ covers $X$, see
  \eqref{eq:Select-Ext-v7:4}. We complete the proof by showing that
  $\mathscr{V}_n$ is pairwise-disjoint as well. To this end, suppose
  that
  $p\in f^{-1}(\st\langle U_1\rangle)\cap f^{-1}(\st\langle
  U_2\rangle)$ for some $U_1,U_2\in \mathscr{U}_n$ and $p\in X$.  Then
  $f(p)\in \st\langle U_1\rangle\cap \st\langle U_2\rangle$ and by
  \eqref{eq:Select-Ext-v7:4}, we have that
  $f(p)\in \langle\sigma_1\rangle\cap \langle\sigma_2\rangle$ for some
  simplices $\sigma_1,\sigma_2\in \Delta(\mathscr{U}_{<\kappa})$ with
  $U_i\in \sigma_i$, $i=1,2$. Since the collection
  $\left\{\langle \sigma\rangle: \sigma\in
    \Delta(\mathscr{U}_{<\kappa})\right\}$ forms a partition of
  $\left|\Delta(\mathscr{U}_{<\kappa})\right|$, this implies that
  $\sigma_1=\sigma_2$. Finally, according to the definition of
  $\Delta(\mathscr{U}_{<\mu})$, see \eqref{eq:Select-Ext-v12:4}, we
  get that $U_1=U_2$. Thus, each family $\mathscr{V}_n$, $n<\mu$, is
  also pairwise-disjoint, and the proof is complete.
\end{proof}

We finalise the paper with several applications. The first one is the
following slight generalisation of Theorem
\ref{theorem-Select-Ext-v6:2}; it is an immediate consequence of
Corollary~\ref{corollary-Select-Ext-v22:1} and Theorem
\ref{theorem-Select-Ext-v18:2} (in the special case of
$\mu=\omega+1$).

\begin{corollary}
  \label{corollary-Select-Ext-v18:1}
  For a paracompact space $X$, the following are equivalent\textup{:}
  \begin{enumerate}
  \item\label{item:Select-Ext-v7:1} $X$ is a $C$-space.
  \item\label{item:Select-Ext-v7:2} For every space $Y$, each
    aspherical sequence $\varphi_n : X\sto Y$, $n<\omega$, of lower
    locally constant mappings admits a continuous selection for its
    union $\bigcup_{n<\omega} \varphi_n$.
  \item\label{item:Select-Ext-v7:3} Each sequence $\mathscr{U}_n$,
    $n<\omega$, of open covers of $X$ admits a canonical map
    $f:X\to \big|\Delta(\mathscr{U}_{<\omega})\big|$.
  \end{enumerate}
\end{corollary}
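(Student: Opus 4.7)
The plan is to apply Corollary \ref{corollary-Select-Ext-v22:1} and Theorem \ref{theorem-Select-Ext-v18:2} with the parameter choice $\mu=\omega+1$, in which case the admissible range $0<\kappa<\mu$ becomes $0<\kappa\leq\omega$, so both finite and countably infinite values of $\kappa$ are permitted. It then suffices to match the three conditions \ref{item:Select-Ext-v7:1}--\ref{item:Select-Ext-v7:3} of the corollary with, respectively, condition \ref{item:Select-Ext-v18:6} of Theorem \ref{theorem-Select-Ext-v18:2}, condition \ref{item:Select-Ext-v18:4} of Corollary \ref{corollary-Select-Ext-v22:1}, and their common condition \ref{item:Select-Ext-v18:7} (equivalently \ref{item:Select-Ext-v18:5}) on the existence of canonical maps.

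First I would verify that \ref{item:Select-Ext-v7:1} is equivalent to \ref{item:Select-Ext-v18:6}. The forward direction is immediate with $\kappa=\omega$. Conversely, any $C$-refinement $\mathscr{V}_0,\dots,\mathscr{V}_{\kappa-1}$ of finite length $\kappa<\omega$ extends to one of length $\omega$ by setting $\mathscr{V}_n=\varnothing$ for $n\geq\kappa$, since empty families are trivially pairwise-disjoint and refine anything. Analogously, \ref{item:Select-Ext-v7:2} is equivalent to \ref{item:Select-Ext-v18:4}: the forward direction is the case $\kappa=\omega$, and conversely any continuous selection for $\bigcup_{n<\kappa}\varphi_n$ with $\kappa<\omega$ is automatically a continuous selection for $\bigcup_{n<\omega}\varphi_n$, because $\bigcup_{n<\kappa}\varphi_n(p)\subset\bigcup_{n<\omega}\varphi_n(p)$ for every $p\in X$.

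Next I would identify \ref{item:Select-Ext-v7:3} with condition \ref{item:Select-Ext-v18:7} (equivalently \ref{item:Select-Ext-v18:5}). For $\kappa=\omega$ the two statements coincide verbatim. For $\kappa<\omega$, a canonical map $f:X\to\left|\Delta(\mathscr{U}_{<\kappa})\right|$ remains canonical after composition with the natural inclusion $\left|\Delta(\mathscr{U}_{<\kappa})\right|\hookrightarrow\left|\Delta(\mathscr{U}_{<\omega})\right|$: the open stars in $\left|\Delta(\mathscr{U}_{<\omega})\right|$ of vertices from $\mathscr{U}_n$ with $n\geq\kappa$ are disjoint from the image of $f$ (which lies in the subcomplex), and for vertices from $\mathscr{U}_n$ with $n<\kappa$ the defining inclusion $f^{-1}(\st\langle U\rangle)\subset U$ is preserved, since the relative interiors of simplices of $\Delta(\mathscr{U}_{<\omega})$ partition $\left|\Delta(\mathscr{U}_{<\omega})\right|$ and hence determine the open stars unambiguously.

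Combining these three matchings with the equivalence \ref{item:Select-Ext-v18:6} $\iff$ \ref{item:Select-Ext-v18:7} from Theorem \ref{theorem-Select-Ext-v18:2} and the equivalence \ref{item:Select-Ext-v18:4} $\iff$ \ref{item:Select-Ext-v18:5} from Corollary \ref{corollary-Select-Ext-v22:1} yields the desired triple equivalence \ref{item:Select-Ext-v7:1} $\iff$ \ref{item:Select-Ext-v7:2} $\iff$ \ref{item:Select-Ext-v7:3}. No genuine obstacle arises here; the only subtlety is the routine bookkeeping of padding with empty families (respectively, composing with a subcomplex inclusion) that reduces finite $\kappa$ to the full case $\kappa=\omega$.
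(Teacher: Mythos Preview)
Your proof is correct and follows exactly the approach the paper indicates: apply Corollary~\ref{corollary-Select-Ext-v22:1} and Theorem~\ref{theorem-Select-Ext-v18:2} with $\mu=\omega+1$. The paper simply declares this an ``immediate consequence'' without spelling out the padding/inclusion bookkeeping you carefully provide, so your argument is a faithful expansion of the intended one-line proof.
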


Another consequence is for the case when $0<\mu=\omega$, and deals
with the so called finite $C$-spaces. These spaces were defined by
Borst for separable metrizable spaces, see \cite{MR2280911};
subsequently, the definition was extended by Valov \cite{valov:00} for
arbitrary spaces. For simplicity, we will consider these spaces in the
realm of normal spaces. In this setting, a (normal) space $X$ is
called a \emph{finite $C$-space} if for any sequence
$\{\mathscr{U} _k:k<\omega\}$ of finite open covers of $X$ there
exists a finite sequence $\{\mathscr{V} _k:k\leq n\}$ of open
pairwise-disjoint families in $X$ such that each $\mathscr{V} _k$
refines $\mathscr{U} _k$ and $\bigcup_{k\leq n}\mathscr{V} _k$ is a
cover of $X$.  It was shown by Valov in \cite[Theorem 2.4]{valov:00}
that a paracompact space $X$ is a finite $C$-space if and only if each
sequence $\{\mathscr{U} _k:k<\omega\}$ of open covers of $X$ admits a
finite $C$-refinement, i.e.\ there exists a finite sequence
$\{\mathscr{V} _k:k\leq n\}$ of open pairwise-disjoint families in $X$
such that each $\mathscr{V} _k$ refines $\mathscr{U} _k$ and
$\bigcup_{k\leq n}\mathscr{V} _k$ is a cover of $X$. Based on this, we
have the following consequence of Corollary
\ref{corollary-Select-Ext-v22:1} and Theorem
\ref{theorem-Select-Ext-v18:2} (in the special case of $\mu=\omega$).

\begin{corollary}
  \label{corollary-Select-Ext-v18:2}
  For a paracompact space $X$, the following are equivalent\textup{:}
  \begin{enumerate}
  \item\label{item:Select-Ext-v12:1} $X$ is a finite $C$-space.
  \item\label{item:Select-Ext-v12:2} For each aspherical sequence
    $\varphi_k : X\sto Y$, $k<\omega$, of lower locally constant
    mappings in a space $Y$, there exists $n<\omega$ such that
    $\varphi_n$ has a continuous selection.
  \item\label{item:Select-Ext-v12:3} Each sequence $\mathscr{U}_k$,
    $k<\omega$, of open covers of $X$ admits a canonical map
    $f:X\to \big|\Delta(\mathscr{U}_{\leq n})\big|$ for some
    $n<\omega$.
  \end{enumerate}
\end{corollary}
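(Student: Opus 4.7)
The plan is to deduce this corollary directly from Theorem \ref{theorem-Select-Ext-v18:2} and Corollary \ref{corollary-Select-Ext-v22:1}, both specialised to $\mu=\omega$, together with Valov's characterisation of paracompact finite $C$-spaces recalled immediately above. When $\mu=\omega$, the phrase ``for some $0<\kappa<\mu$'' simply means ``for some positive integer $\kappa=n+1$'', which reconciles the two notations and matches the form of the conclusions in (a)--(c).

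For the equivalence (a) $\iff$ (c), I would use the quoted Valov theorem to identify (a) with the condition that every sequence of open covers of $X$ has a finite $C$-refinement. This is precisely condition \ref{item:Select-Ext-v18:6} of Theorem \ref{theorem-Select-Ext-v18:2} in the case $\mu=\omega$, and its equivalent condition \ref{item:Select-Ext-v18:7} says that every such sequence admits a canonical map $f:X\to |\Delta(\mathscr{U}_{<\kappa})|=|\Delta(\mathscr{U}_{\leq \kappa-1})|$ for some $0<\kappa<\omega$. Renaming $\kappa-1$ as $n$ turns this into exactly (c).

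For the equivalence (b) $\iff$ (c), I would invoke Corollary \ref{corollary-Select-Ext-v22:1} with $\mu=\omega$, whose condition \ref{item:Select-Ext-v18:5} coincides with our (c) and whose condition \ref{item:Select-Ext-v18:4} is that every aspherical sequence of lower locally constant mappings admits a continuous selection for $\bigcup_{k<\kappa}\varphi_k$ for some $0<\kappa<\omega$. The implication (b) $\Rightarrow$ \ref{item:Select-Ext-v18:4} is immediate, since a selection for $\varphi_n$ is automatically one for $\varphi_0\cup\cdots\cup\varphi_n$. For the converse, I would inspect the construction in the proof of \ref{item:Select-Ext-v14:2}$\Rightarrow$\ref{item:Select-Ext-v14:1} of Theorem \ref{theorem-Select-Ext-vgg-rev:1}: the selection there has the form $g=f\circ h$, where $h$ selects $|\Delta_{[\mathscr{F}_{<\kappa}]}|=|\Delta_{[\mathscr{F}_{\leq\kappa-1}]}|$ and $f$ is the map furnished by Theorem \ref{theorem-Select-Ext-v14:1}. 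Applying \eqref{eq:Select-Ext-v14:1} with $n=\kappa-1$ then forces $g(x)\in\varphi_{\kappa-1}(x)$ for every $x\in X$, so the union selection is already a selection for the single mapping $\varphi_{\kappa-1}$, which gives (b) with $n=\kappa-1$.

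The only mildly subtle point, and the step I expect to be the main obstacle, is exactly this last refinement. The conclusion \ref{item:Select-Ext-v18:4} of Corollary \ref{corollary-Select-Ext-v22:1} is phrased in terms of the union $\bigcup_{k<\kappa}\varphi_k$, whereas our (b) demands a selection for a single $\varphi_n$; one must recognise that the composite $f\circ h$ built in the proof of Theorem \ref{theorem-Select-Ext-vgg-rev:1} automatically lands in $\varphi_{\kappa-1}(x)$ because $h$ already takes values in the $(\kappa-1)$-st subcomplex. With that observation, the full chain (a) $\iff$ (b) $\iff$ (c) follows immediately.
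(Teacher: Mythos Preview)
Your proposal is correct and follows the same route the paper sketches: invoke Corollary~\ref{corollary-Select-Ext-v22:1} and Theorem~\ref{theorem-Select-Ext-v18:2} with $\mu=\omega$, together with Valov's characterisation of paracompact finite $C$-spaces. You additionally make explicit a point the paper leaves implicit, namely that the selection $g=f\circ h$ built in the proof of Theorem~\ref{theorem-Select-Ext-vgg-rev:1} already lands in $\varphi_{\kappa-1}$ (via \eqref{eq:Select-Ext-v14:1}), so that condition~\ref{item:Select-Ext-v18:4} of Corollary~\ref{corollary-Select-Ext-v22:1} upgrades to the single-mapping statement~(b); this observation is correct and necessary.
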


Let us explicitly remark that the equivalence
\ref{item:Select-Ext-v12:1}$\iff$\ref{item:Select-Ext-v12:2} in
Corollary \ref{corollary-Select-Ext-v18:2} was obtained by Valov in
\cite[Theorem 1.1]{valov:00}.  His arguments were following those in
\cite{uspenskij:98} for proving Theorem \ref{theorem-Select-Ext-v6:1}.
Accordingly, our approach is providing a simplification of this
proof. Regarding the proper place of finite $C$-spaces, it was shown
by Valov in \cite[Proposition 2.2]{valov:00} that a Tychonoff space
$X$ is a finite $C$-space if and only if its \v{C}ech-Stone
compactification $\beta X$ is a $C$-space. This brings a natural
distinction between the selection problems stated in
\ref{item:Select-Ext-v18:1} and \ref{item:Select-Ext-v18:2}.

\begin{example}
  \label{example-Select-Ext-v18:1}
  The following example of a $C$-space which is not finite $C$ was
  given in \cite[Remark 3.7]{MR2080284}. Let $K_\omega$ be the
  subspace of the Hilbert cube $[0,1]^\omega$ consisting of all points
  which have only finitely many nonzero coordinates. Then $K_\omega$
  is a $C$-space being strongly countable-dimensional, but is not a
  finite $C$-space because each compactification of $K_\omega$
  contains a copy of $[0,1]^\omega$ (as per \cite[Example
  5.5.(1)]{MR722011}). According to Corollaries
  \ref{corollary-Select-Ext-v18:1} and
  \ref{corollary-Select-Ext-v18:2}, see also Theorem
  \ref{theorem-Select-Ext-vgg-rev:1}, this implies that there exists a
  space $Y$ and an aspherical sequence $\varphi_n:K_\omega\sto Y$,
  $n<\omega$, of lower locally constant mappings such that
  $\bigcup_{n<\omega}\varphi_n$ has a continuous selection, but none
  of the mappings $\varphi_n$, $n<\omega$, has a continuous
  selection.\qed
\end{example}

Our last application is for the case when $\mu=n+1$ for some
$n<\omega$. To this end, following \cite{gutev:2018a}, a finite
sequence $\varphi_k:X\sto Y$, $0\leq k\leq n$, of mappings will be
called \emph{aspherical} if $\varphi_k\embed{k} \varphi_{k+1}$, for
every $k<n$. By letting $\varphi_k(p)=Y*q$ be the cone on $Y$ with a
fixed vertex $q$, where $p\in X$ and $k>n$, each finite aspherical
sequence $\varphi_k:X\sto Y$, $0\leq k\leq n$, can be extended to an
aspherical sequence $\varphi_k:X\sto Y*q$, $k<\omega$. Furthermore, in
this construction, each resulting new mapping $\varphi_k$, $k>n$, is
lower locally constant being a constant set-valued mapping.\medskip

Regarding dimension properties of the domain, let us recall a result
of Ostrand \cite{ostrand:71} that for a normal space $X$ with a
covering dimension $\dim(X)\leq n$, each open locally finite cover
$\mathscr{U}$ of $X$ admits a sequence
$\mathscr{V}_0, \dots, \mathscr{V}_n$ of open pairwise-disjoint
families such that each $\mathscr{V}_k$ refines $\mathscr{U}_k$ and
$\bigcup_{k=0}^n\mathscr{V}_k$ covers $X$. This result was refined by
Addis and Gresham, see \cite[Proposition 2.12]{addis-gresham:78}, that
a paracompact space $X$ has a covering dimension $\dim(X)\leq n$ if
and only if each finite sequence $\mathscr{U}_0,\dots, \mathscr{U}_n$
of open covers of $X$ has a finite $C$-refinement, i.e.\ there exists
a finite sequence $\mathscr{V}_0,\dots, \mathscr{V}_n$ of open
pairwise-disjoint families of $X$ such that each $\mathscr{V}_k$
refines $\mathscr{U}_k$ and $\bigcup_{k=0}^n \mathscr{V}_k$ covers
$X$. Just like before, setting $\mathscr{U}_k=\mathscr{U}_n$, $k>n$,
the above characterisation of the covering dimension of paracompact
spaces remains valid for an infinite sequence $\mathscr{U}_k$,
$k<\omega$, of open covers of $X$. Accordingly, we also have the
following consequence of Corollary \ref{corollary-Select-Ext-v22:1}
and Theorem \ref{theorem-Select-Ext-v18:2} (in the special case of
$\mu=n+1<\omega$).

\begin{corollary}
  \label{corollary-Select-Ext-v18:3}
    For a paracompact space $X$, the following are equivalent\textup{:}
    \begin{enumerate}
    \item\label{item:Select-Ext-v18:8} $\dim(X)\leq n$.
    \item\label{item:Select-Ext-v18:9} For each aspherical sequence
      $\varphi_k : X\sto Y$, $0\leq k\leq n$, of lower locally
      constant mappings in a space $Y$, the mapping $\varphi_n$ has
      a continuous selection.
    \item\label{item:Select-Ext-v18:10} Each sequence $\mathscr{U}_k$,
      $0\leq k\leq n$, of open covers of $X$ admits a canonical map
      $f:X\to \big|\Delta(\mathscr{U}_{\leq n})\big|$.
    \end{enumerate}
\end{corollary}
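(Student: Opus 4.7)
The plan is to derive the corollary by instantiating Theorem~\ref{theorem-Select-Ext-v18:2} and Corollary~\ref{corollary-Select-Ext-v22:1} in the finite-dimensional range (the author's $\mu=n+1$), using the Addis--Gresham characterisation of $\dim(X)\le n$ recalled just above the statement to bridge \ref{item:Select-Ext-v18:8} to the $C$-refinement clause of Theorem~\ref{theorem-Select-Ext-v18:2}, and using the cone extension trick $\tilde\varphi_k(p)=Y*q$ for $k>n$ to convert the finite aspherical sequence of \ref{item:Select-Ext-v18:9} into one eligible for Theorem~\ref{theorem-Select-Ext-v14:1}.

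For \ref{item:Select-Ext-v18:8}$\Longleftrightarrow$\ref{item:Select-Ext-v18:10}, Addis--Gresham (extended to infinite sequences by repeating $\mathscr{U}_n$, as the author notes) says $\dim(X)\le n$ is exactly the statement that every sequence $\mathscr{U}_k$, $k<\omega$, of open covers of $X$ has a finite $C$-refinement of length at most $n+1$. This is condition \ref{item:Select-Ext-v18:6} of Theorem~\ref{theorem-Select-Ext-v18:2} in the relevant range, whose \ref{item:Select-Ext-v18:7} gives a canonical map into $|\Delta(\mathscr{U}_{<\kappa})|$ for some admissible $\kappa$. Since $|\Delta(\mathscr{U}_{<\kappa})|$ is a subcomplex of $|\Delta(\mathscr{U}_{\le n})|$ and the open star (in the larger complex) of any vertex lying in $\bigsqcup_{k=\kappa}^n\mathscr{U}_k$ is disjoint from that subcomplex, any canonical map into $|\Delta(\mathscr{U}_{<\kappa})|$ for the subcover is automatically canonical for $\bigsqcup_{k\le n}\mathscr{U}_k$ into $|\Delta(\mathscr{U}_{\le n})|$, yielding \ref{item:Select-Ext-v18:10}. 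The converse is read off from the proof of Theorem~\ref{theorem-Select-Ext-v18:2} applied to the truncated--repeated infinite sequence.

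For \ref{item:Select-Ext-v18:10}$\Longrightarrow$\ref{item:Select-Ext-v18:9}, extend a finite aspherical sequence $\varphi_0,\dots,\varphi_n$ in $Y$ to an aspherical sequence $\tilde\varphi_k:X\sto Y*q$, $k<\omega$, by $\tilde\varphi_k\equiv Y*q$ for $k>n$; the new terms are constant hence lower locally constant, and $\varphi_n\embed{n}Y*q$ holds since $Y*q$ is contractible. Theorem~\ref{theorem-Select-Ext-v14:1} then produces closed locally finite interior covers $\mathscr{F}_k$, $k<\omega$, and a continuous ${f:|\Delta(\mathscr{F}_{<\omega})|\to Y*q}$ with $f\circ|\Delta_{[\mathscr{F}_{\le k}]}|\subset\tilde\varphi_k$. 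Let $\mathscr{U}_k$ be the interiors of $\mathscr{F}_k$ for $k\le n$. By \ref{item:Select-Ext-v18:10} applied to $\mathscr{U}_0,\dots,\mathscr{U}_n$ there is a canonical map $g:X\to|\Delta(\mathscr{U}_{\le n})|$; by Proposition~\ref{proposition-Select-Ext-v11:1} this is a continuous selection for $|\Delta_{[\mathscr{U}_{\le n}]}|$, and by Proposition~\ref{proposition-Select-Ext-v21:1} it pulls back to a continuous selection $h:X\to|\Delta(\mathscr{F}_{\le n})|$ for $|\Delta_{[\mathscr{F}_{\le n}]}|$. The composite $f\circ h$ is then the required continuous selection for $\varphi_n$, with values in $Y$ because $\varphi_n(X)\subset Y$. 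For the converse \ref{item:Select-Ext-v18:9}$\Longrightarrow$\ref{item:Select-Ext-v18:10}, paracompactness supplies open locally finite refinements $\mathscr{U}'_k$ of the given $\mathscr{U}_k$; Propositions~\ref{proposition-Select-Ext-v16:1} and~\ref{proposition-Select-Ext-v16:2} then exhibit $|\Delta_{[\mathscr{U}'_{\le k}]}|:X\sto|\Delta(\mathscr{U}'_{\le n})|$, $0\le k\le n$, as a finite aspherical sequence of lower locally constant mappings. Hypothesis \ref{item:Select-Ext-v18:9} produces a continuous selection for $|\Delta_{[\mathscr{U}'_{\le n}]}|$, which Proposition~\ref{proposition-Select-Ext-v11:1} reads as a canonical map for $\bigsqcup_{k\le n}\mathscr{U}'_k$, and Proposition~\ref{proposition-Select-Ext-v21:1} transports it to a canonical map into $|\Delta(\mathscr{U}_{\le n})|$.

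The main obstacle is that a direct invocation of Corollary~\ref{corollary-Select-Ext-v22:1} only outputs a selection for the \emph{union} $\bigcup_{k<\kappa}\tilde\varphi_k$, whereas \ref{item:Select-Ext-v18:9} demands a selection for the specific terminal $\varphi_n$. This is why one must pass through Theorem~\ref{theorem-Select-Ext-v14:1} explicitly rather than quoting Corollary~\ref{corollary-Select-Ext-v22:1} as a black box: because the pulled-back $h$ lands in $|\Delta(\mathscr{F}_{\le n})|$, for each $x$ the simplex $\sigma$ with $h(x)\in\langle\sigma\rangle$ satisfies $\sigma\in\Delta_{[\mathscr{F}_{\le n}]}(x)$, so the skeletal selection property gives $f(h(x))\in f(|\sigma|)\subset\varphi_n(x)$ in full rather than an element of $\bigcup_{k\le n}\varphi_k(x)$ with varying index.
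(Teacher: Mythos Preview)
Your proof is correct and follows essentially the same route the paper indicates: combine Theorem~\ref{theorem-Select-Ext-v18:2} and Corollary~\ref{corollary-Select-Ext-v22:1} in the finite range (the paper's ``$\mu=n+1$'' should be read as $\mu=n+2$ so that $\kappa=n+1$ is admissible), together with the Addis--Gresham characterisation and the cone-extension $\tilde\varphi_k\equiv Y*q$ for $k>n$.

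Your final paragraph identifies a genuine subtlety that the paper suppresses. The literal statement of Corollary~\ref{corollary-Select-Ext-v22:1} only produces a selection for $\bigcup_{k<\kappa}\varphi_k$, and since an aspherical sequence need not be increasing, this does not directly yield a selection for $\varphi_n$. Your remedy --- unpacking Theorem~\ref{theorem-Select-Ext-v14:1} so that the selection $h$ lands in $|\Delta(\mathscr{F}_{\le n})|$ and then invoking \eqref{eq:Select-Ext-v14:1} for the specific index $n$ --- is exactly what the proof of Theorem~\ref{theorem-Select-Ext-vgg-rev:1} does internally; the paper simply relies on the reader tracing this through rather than quoting the corollary as a black box. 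So your argument is not a different method but a needed clarification of the same one, and the same comment applies verbatim to Corollary~\ref{corollary-Select-Ext-v18:2}.
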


A direct poof of the implication
\ref{item:Select-Ext-v18:8}$\implies$\ref{item:Select-Ext-v18:9} in
Corollary \ref{corollary-Select-Ext-v18:3} was given in \cite[Theorem
3.1]{gutev:2018a}. Let us also remark that in the special case when
all mappings $\varphi_k$, $0\leq k\leq n$, are equal, the equivalence
of \ref{item:Select-Ext-v18:8} and \ref{item:Select-Ext-v18:9} in
Corollary \ref{corollary-Select-Ext-v18:3} was shown in \cite[Remark
2]{uspenskij:98} and credited to Ernest Michael.


\providecommand{\bysame}{\leavevmode\hbox to3em{\hrulefill}\thinspace}

\end{document}